\title{Accurate  error estimation in CG\thanks{Version of \today. The work of J.~Pape\v{z} and P.~Tich\'{y} was supported by the Grant Agency of the Czech Republic under grant no. 20-01074S}}
\author{G\'erard Meurant\footnotemark[1],
        Jan Pape\v{z}\footnotemark[2]
        \and Petr Tich\'{y}\footnotemark[3] }
\definecolor{mygreen}{RGB}{28,172,0} 
\definecolor{mylilas}{RGB}{170,55,241}
\newcommand{\klein}[1]{\scriptscriptstyle #1}
\newcommand{\cfa}{\alpha}        
\newcommand{\cfb}{\beta}         
\newcommand{\cfgq}{\Delta}         
\newcommand{\smu}{\scriptscriptstyle (\mu)}
\newcommand{\MT}[1]{\widetilde{\Delta}_{#1}}
\newcommand{\AN}[1]{\varepsilon_{#1}}
\begin{document}

\maketitle

\renewcommand{\thefootnote}{\fnsymbol{footnote}}
\footnotetext[1]{Paris, France.
E-mail: {\tt gerard.meurant@gmail.com}.}
\footnotetext[2]{Institute of Mathematics, Czech Academy of Sciences, Prague, Czech Republic. E-mail: {\tt papez@math.cas.cz}.}
\footnotetext[3]{Faculty of Mathematics and Physics, Charles University, Prague, Czech Republic. E-mail: {\tt petr.tichy@mff.cuni.cz}.}

\begin{abstract}
In practical computations, the (preconditioned) conjugate gradient (P)CG method is the iterative method of choice for solving systems of linear algebraic equations $Ax=b$ with a real symmetric positive definite matrix $A$.
During the iterations it is important to monitor the quality
of the approximate solution $x_k$ so that
the process could be stopped whenever $x_k$
is accurate enough. One of the most relevant quantities
for monitoring the quality of $x_k$ is the squared $A$-norm of the error vector $x-x_k$. This quantity cannot be easily evaluated, however, it can be estimated. Many of the existing estimation techniques are inspired by
the view of CG as a procedure for approximating a certain Riemann--Stieltjes integral.
The most natural technique is based on the Gauss quadrature approximation and
provides a lower bound on the quantity of interest. The bound can be cheaply evaluated using terms that have to be computed anyway in the forthcoming CG iterations. If the squared $A$-norm of the error vector decreases rapidly, then the lower bound represents a tight estimate. In this paper we suggest  a heuristic strategy
aiming to answer the question of how many forthcoming CG iterations are needed to get an estimate with the prescribed accuracy. Numerical experiments demonstrate that the suggested strategy is efficient and robust.

\end{abstract}

\begin{keywords}
Conjugate gradients, Error estimation, Accuracy of the estimate
\end{keywords}

\begin{AMS}
15A06, 65F10
\end{AMS}

\section{Introduction}
 \label{sec:introduction}

Nowadays, the (preconditioned) conjugate gradient (P)CG method of Hestenes
and Stiefel \cite{HeSt1952} is the method of choice for solving large
and sparse systems of linear algebraic equations
\begin{equation}
Ax=b\label{eqn:Axb}
\end{equation}
with a real symmetric positive definite matrix $A$ of order $n$.
In theory, the CG method has the best of the two worlds of iterative
and direct linear solvers. As an iterative method, it has low memory
requirements and provides an approximate solution $x_{k}$ at each
iteration allowing to stop the algorithm whenever a stopping criterion
is met. As a direct method, it finds (assuming exact arithmetic) the
solution after at most $n$ iterations, and the approximate
solution $x_{k}$ is optimal in the sense that it minimizes the squared
$A$-norm (also called the energy norm) of the \emph{error vector}
$x-x_{k}$,
\begin{equation}
\varepsilon_{k}\equiv\Vert x-x_{k}\Vert_{A}^{2}=(A(x-x_{k}),x-x_{k}),\label{eq:error}
\end{equation}
over the underlying linear manifold. Note that $\left\Vert x-y\right\Vert _{A}^{2}$ as a function of $y$ was called ``the error function'' in the original
Hestenes and Stiefel paper \cite{HeSt1952}. The authors mention
\cite[p.~413]{HeSt1952} that this function can be used as a measure
of the ``goodness'' of $x_{k}$ as an approximation of $x$. Following
them and for the sake of simplicity, we will call $\varepsilon_{k}$
the \emph{error}. In the context of solving various real word problems, the error $\varepsilon_{k}$ has an important meaning, e.g.,
in physics, quantum chemistry, or mechanics, and plays a fundamental role in evaluating convergence and estimating the algebraic error in the context of numerical solving of PDE's \cite{Ar2004,JiStVo2010,ArGeLo2013,DoTi2020}.

The theoretical properties of CG can be described in many mathematically
equivalent ways. For example, the connection with orthogonal polynomials,
which has already been noticed in \cite{HeSt1952}, allows seeing
CG as a procedure for computing Gauss quadrature approximation to
a Riemann--Stieltjes integral. This view of CG is very useful not
only for estimating norms of the error vectors, but also for analyzing
the CG behavior in finite precision arithmetic. During many finite precision
CG computations, the orthogonality among the residual vectors is usually lost
quickly and convergence is delayed \cite{Gr1989,GrSt1992}. Based
on previous results of Paige \cite{Pa1980a}, it has been shown by
Greenbaum~\cite{Gr1989} that the results of finite precision CG computations
can be interpreted (up to some small inaccuracy) as results of exact
CG applied to a larger system with a system matrix having many eigenvalues
distributed throughout tiny intervals around the eigenvalues of $A$.
In other words, the results of \cite{Gr1989} show that some theoretical
properties of CG remain valid even in finite precision arithmetic.
For a summary of the properties of the CG and Lanczos algorithms in finite precision arithmetic see, e.g., \cite{B:Me2006,MeSt2006}.

Inspired by the connection of CG with Riemann--Stieltjes integrals,
a way of research on estimating norms of the error vector was started
by Gene Golub in the 1970s and continued throughout the years with
several collaborators (e.g., G.~Dahlquist, S.~Eisenstat, S.~Nash,
B.~Fischer, G.~Meurant, Z.~Strako\v{s}). The main idea is to approximate
the Riemann--Stieltjes integral of a suitable function by a Gauss or another modified quadrature rule (Gauss--Radau, Gauss--Lobatto, anti-Gauss etc.), and
try to bound the estimated quantity \cite{GoMe1994,GoSt1994,GoMe1997}. Note that the resulting
bounds can sometimes be very inaccurate approximations to the quantity of interest.

As already mentioned above, the error $\varepsilon_{k}$ must play a prominent role
in evaluating CG convergence. In this paper we concentrate on its accurate
estimation. Following the ideas of \cite{GoSt1994,GoMe1997,StTi2002,MeTi2013,MeTi2019},
one can improve the accuracy of the bounds, considering quadrature
rules at iterations $k$ and $k+d$ for some integer $d\geq0$ called
the \emph{delay}. At CG iteration $k+d$, an improved estimate of
the error at iteration $k$ is obtained. For a detailed construction
of the (lower and upper) bounds on $\varepsilon_{k}$ based on the
delay approach see, e.g., \cite[Section 2.5]{MeTi2013}. The larger
the delay is, the better are the bounds at iteration $k$. However,
a constant value of $d$ is usually not sufficient in the initial
stage of convergence, and it may require too many extra steps of CG in
the convergence phase. Hence, there is a need for developing a heuristic
technique to choose $d$ adaptively at each iteration, to reflect
the required accuracy of the estimate.

In this paper, we focus on estimating the error $\varepsilon_{k}$
using the Gauss quadrature approach that provides a lower bound. We address
the problem of the adaptive choice of $d$. In particular, we develop
an adaptive (heuristic) strategy, which aims to ensure that the lower
bound approximates the error $\varepsilon_{k}$ with a prescribed
tolerance. Note that an analogous technique can be also used to improve
the accuracy of other bounds based, e.g., on Gauss--Radau quadrature.

The paper is organized as follows.
In \Cref{sec:CG} we introduce the notation, recall the standard Hestenes and Stiefel version of the CG algorithm, and present formulas that will be used
for constructing the lower bound of the error.
In \Cref{sec:adaptd}, we describe an adaptive strategy for choosing the delay~$d$, aiming to meet the prescribed accuracy of the lower bound.
\Cref{sec:PCG} shows how to modify the formulas for preconditioned CG.
Some possible extensions to make the adaptive strategy more reliable in hard cases are presented in \Cref{sec:extensions}.
Results of numerical experiments are given in \Cref{sec:experiments} and the paper ends with a concluding discussion. In \Cref{sec:MATLABcode} we provide a simplified MATLAB code of the suggested algorithm.

\section{The CG algorithm and the lower bound on the error}
 \label{sec:CG}


\begin{algorithm}[ht]
\caption{Conjugate Gradients} \label{alg:cg}

\begin{algorithmic}[0]
\STATE \textbf{input} $A$, $b$, $x_{0}$
\STATE $r_{0}=b-Ax_{0}$, $p_{0}=r_{0}$
\FOR{$k=0,\dots$ until convergence}
    \medskip
    \STATE $\cfa_{k}=\frac{r_{k}^{T}r_{k}}{p_{k}^{T}Ap_{k}}$
    \STATE $x_{k+1}=x_{k}+\cfa_{k}p_{k}$
    \STATE $r_{k+1}=r_{k}-\cfa_{k}Ap_{k}$
         \hspace*{15mm}%
        \smash{$\left.\begin{array}{@{}c@{}}\\{}\\{}\\{}\\{}\\{}\end{array}
        \right\} \begin{tabular}{l}{\tt cgiter(k)}\end{tabular}$}
    \STATE $\cfb_{k+1}=\frac{r_{k+1}^{T}r_{k+1}}{r_{k}^{T}r_{k}}$
    \STATE $p_{k+1}=r_{k+1}+\cfb_{k+1}p_{k}$
    \medskip
\ENDFOR
\end{algorithmic}

\end{algorithm}

The classical version of the Conjugate Gradient method (Hestenes and Stiefel, \cite{HeSt1952}) is given by \cref{alg:cg}. For later use, we denote the function that  performs one CG iteration for updating the approximations~$x_k$, corresponding residuals~$r_k$, direction vectors~$p_k$, and computing the CG coefficients~$\cfa_{k}$ and~$\cfb_{k+1}$ by {\tt cgiter}(k). 

The theoretical properties of CG are well known and we do not list them here; see, for instance, \cite{B:Me2006}. In our context it is only important that the sequence of errors $\{\AN{k}\}$ is decreasing and that the value of the difference between two consecutive terms is known. The proof of results of the following lemma can be found in standard literature; see, e.g., \cite{GoSt1994, B:Me2006, StTi2002}.


\begin{lemma}
 \label{lem:basic}
Until the solution is found, i.e.,~$x_{s}=x$, it holds that
\begin{equation}
 \label{eqn:localdecrease}
 \AN{k} = \cfa_k \| r_k \|^2 + \AN{k+1}, \qquad k = 0, 1, \ldots, s-1,
\end{equation}
with $\cfa_k \| r_k \|^2 > 0$, $k = 0, 1, \ldots, s-1$.
In particular, the squared $A$-norm of the error vector $x-x_k$ in CG is decreasing, i.e.,
\begin{equation}
 \label{eqn:monoCG}
 \AN{k} > \AN{k+1}, \qquad k = 0, 1, \ldots, s-1.
\end{equation}
\end{lemma}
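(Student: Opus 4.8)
The plan is to compute the difference $\AN{k} - \AN{k+1}$ directly from the CG update for the approximate solution and then read off both claims. First I would introduce the error vector $e_k \equiv x - x_k$ and record the identity $A e_k = b - A x_k = r_k$, so that $\AN{k} = (A e_k, e_k)$. The update $x_{k+1} = x_k + \cfa_k p_k$ from \cref{alg:cg} gives $e_{k+1} = e_k - \cfa_k p_k$, and expanding the $A$-inner product yields
\begin{equation*}
\AN{k+1} = (A e_{k+1}, e_{k+1}) = \AN{k} - 2\cfa_k (r_k, p_k) + \cfa_k^2 (A p_k, p_k).
\end{equation*}

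Next I would simplify the two correction terms using the CG coefficients. The definition $\cfa_k = (r_k, r_k)/(p_k^{T} A p_k)$ immediately gives $\cfa_k (A p_k, p_k) = \| r_k \|^2$, which disposes of the quadratic term. For the linear term I would invoke the standard local orthogonality of CG, namely that the residual $r_k$ is orthogonal to the previous direction vector $p_{k-1}$; combined with $p_k = r_k + \cfb_k p_{k-1}$ this gives $(r_k, p_k) = (r_k, r_k) = \| r_k \|^2$. Substituting both identities collapses the display to $\AN{k+1} = \AN{k} - \cfa_k \| r_k \|^2$, which is exactly \cref{eqn:localdecrease}.

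To finish, positivity of the increment follows because $A$ is symmetric positive definite, so $p_k^{T} A p_k > 0$ whenever $p_k \neq 0$, and because the solution has not yet been reached, so $r_k \neq 0$ and hence $\| r_k \|^2 > 0$ and $\cfa_k > 0$; thus $\cfa_k \| r_k \|^2 > 0$. The monotonicity \cref{eqn:monoCG} is then an immediate consequence of \cref{eqn:localdecrease}.

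I expect the only genuine subtlety to be the identity $(r_k, p_k) = \| r_k \|^2$, since this is where a global CG property (the orthogonality of the residual against the Krylov subspace, here needed only in the minimal form $r_k \perp p_{k-1}$) enters an otherwise purely algebraic computation; everything else is bookkeeping with the update formulas. As this orthogonality is standard, I would cite it rather than reprove it. Alternatively, one could characterize $\cfa_k$ as the exact line-search minimizer of $t \mapsto \| e_k - t p_k \|_A^2$, which makes $\AN{k} - \AN{k+1} = \cfa_k (r_k, p_k)$ transparent, but this still requires $(r_k, p_k) = \| r_k \|^2$ in order to match the Hestenes--Stiefel coefficient.
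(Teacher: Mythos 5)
Your derivation is correct and is precisely the standard argument: the paper itself does not prove this lemma but defers to the cited references (Hestenes--Stiefel, Golub--Strako\v{s}, Strako\v{s}--Tich\'{y}), and what you write is the computation found there, namely expanding $\|e_k-\cfa_k p_k\|_A^2$ and using $\cfa_k\,(Ap_k,p_k)=\|r_k\|^2$ together with $(r_k,p_k)=\|r_k\|^2$. One small refinement worth noting: the identity $r_k^{T}p_{k-1}=0$ that you flag as the only subtlety is itself a purely \emph{local} consequence of the recurrences (a one-step induction using $r_k=r_{k-1}-\cfa_{k-1}Ap_{k-1}$ and the formula for $\cfa_{k-1}$), so no global Krylov orthogonality is needed --- a point that matters for the paper's later claim that \eqref{eqn:localdecrease} survives, up to small perturbations, in finite precision arithmetic.
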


Note that \cref{lem:basic} assumes exact arithmetic so that
the algorithm always finds the exact solution, i.e., $x_s=x$ for some $s\leq n$.
In finite precision computations, the errors $\AN{k}$ usually do not approach zero, but reach an ultimate level
 of accuracy proportional to the squared machine precision,
and then, they stagnate. Nevertheless, it has been shown in \cite{StTi2002} that until the ultimate level of accuracy is reached,
the identity \eqref{eqn:localdecrease} and the inequality \eqref{eqn:monoCG} hold also for the computed quantities, up to some small inaccuracy. Hence, considering iterations $k$ before this level is reached, we can assume
that \eqref{eqn:localdecrease} and \eqref{eqn:monoCG} do hold also
during finite precision computations.

The relation \eqref{eqn:localdecrease} is the basis for the derivation of the lower bound on the error. Given a nonnegative integer $d$
and assuming that \eqref{eqn:localdecrease} holds for iterations
$k$, $k+1$, $\dots$, $k+d$, with $k+d\le s-1$, we obtain
\begin{equation}
 \label{eqn:dlocaldecrease}
 \AN{k} \;=\; \sum^{k+d}_{j=k} \cfa_{j} \| r_{j} \|^2 \;+\; \AN{k+d+1}\,,
\end{equation}
see \cite{StTi2002}, and we define
\begin{equation}
 \label{eqn:Delta}
 \cfgq_{k:k+d} \equiv \sum^{k+d}_{j=k} \cfa_{j} \| r_{j} \|^2.
\end{equation}
Note that instead of $\cfgq_{k:k}$ we write $\cfgq_{k}$.
Using the fact that $\AN{k+d+1} \ge 0$, the quantity \eqref{eqn:Delta}
%
represents a lower bound on $\AN{k}$.
The indisputable advantage of the bound \eqref{eqn:Delta} is that it is very cheap to compute since it only involves quantities which have to be computed in CG, and,
as already mentioned, it works under natural assumptions also for finite precision computations; for more details see~\cite{StTi2002}.


While \eqref{eqn:dlocaldecrease} suggests that the computable lower bound
\eqref{eqn:Delta} is tight for sufficiently large $d$, the choice of $d$
that ensures a sufficient accuracy of the lower bound in practical computations,
is usually unknown. A constant value of $d$ can be fine in some
special cases with a rapid and linear convergence. However, in general, we expect that the convergence of the $A$-norm of the error vector $x-x_k$ in (P)CG is irregular. Quasi-stagnation can alternate with convergence periods, and in a convergence period we can often observe  linear, superlinear, but also sublinear convergence.

To illustrate numerically the behavior of the bound $\cfgq_{k:k+d}^{\klein{1/2}}$, we consider the matrix {\tt s3dkq4m2} of order $n = 90449$ that  can be downloaded from the
SuiteSparse Matrix collection\footnote{https://sparse.tamu.edu}.
The right-hand side vector $b$ is randomly generated and normalized to~$1$. The factor $L$ in the preconditioner
$M = LL^T$ is determined by the MATLAB incomplete Cholesky (ichol) factorization with threshold dropping,
{\tt type = 'ict'}, {\tt droptol = 1e-5}, and with the global diagonal shift {\tt diagcomp = 1e-2}.
\begin{figure}[!htbp]
\centering
 \includegraphics[width=8cm]{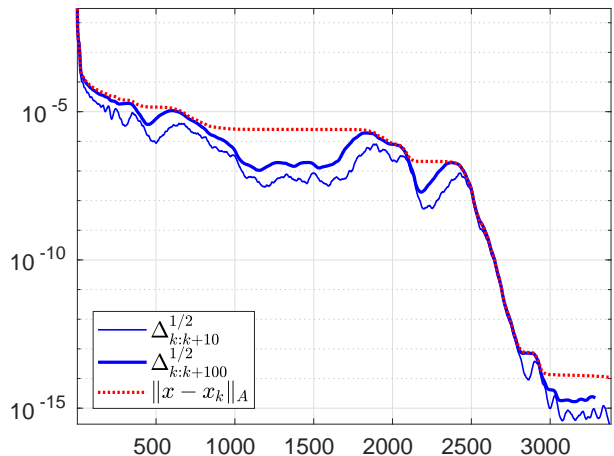}
\caption{Matrix {\tt s3dkq4m2}: the $A$-norm of the error vector $x-x_k$ (dotted curve) and the  lower bound $\Delta_{k:k+d}^{\klein{1/2}}$  for two fixed values of~$d$: $d=10$ (solid curve) and $d=100$ (thick solid curve).} \label{fig:01}
\end{figure}

In \Cref{fig:01} we can observe that $\| x-x_k \|_A$ (dotted curve) exhibits an irregular behaviour. The lower bound $\Delta_{k:k+d}^{\klein{1/2}}$  for $d=10$ (solid curve) underestimates
significantly the quantity of interest
in the case of quasi-stagnation, but,
when convergence starts around iteration 2500, the lower bound
$\Delta_{k:k+d}^{\klein{1/2}}$ is sufficiently accurate.
The choice $d=100$ (thick solid curve) improves the results, but $\| x-x_k \|_A$ is still underestimated by several orders of magnitude. However,
in the final convergence phase we now use a value of $d$ larger than necessary, and, therefore, we waste computational resources, particularly, if the error is used as a criterion to stop the iterations. One may think that we do not have to care too much about the quasi-stagnation phase, but if the underestimation of the $A$-norm of the error is too large, we may undesirably stop the iterations too soon when we are far of having a good approximation of the solution.

This example demonstrates clearly a need for developing a technique to choose $d$ adaptively.
As we said, if our stopping criterion is based on the lower bound \eqref{eqn:Delta} and $d$ is too small, we are at risk to stop the iterations too early.
At the same time, we would like to keep~$d$ as small as possible to avoid unnecessary iterations.
This is a nontrivial task. As we will see later, we can never guarantee a completely safe choice of $d$. Nevertheless, we are able to suggest a strategy that works satisfactorily in most cases.

\section{The adaptive choice of~$d$}
 \label{sec:adaptd}
In this section we set a requirement on the accuracy of the bound, introduce the ideal value of~$d$, which ensures this accuracy, and present a first adaptive strategy for the choice of~$d$.


\subsection{Prescribing the accuracy of the estimate}

Given a prescribed tolerance $\tau \in (0,1)$, we require that the relative error of the lower bound satisfies
\begin{equation}
 \label{eqn:tolerance}
 \frac{\AN{k}-\cfgq_{k:k+d}}{\AN{k}} \leq \tau.
\end{equation}
By simple manipulations, this is equivalent to
\begin{equation}
 \label{eqn:aUP}
 \varepsilon_{k} \leq \frac{\cfgq_{k:k+d}}{1-\tau}.
\end{equation}
In other words, if \eqref{eqn:tolerance} holds, then we also get an upper bound on the error. We observe that the relative error of the upper bound \eqref{eqn:aUP} is bounded above by $\tau/(1-\tau)$. Note also that if \eqref{eqn:tolerance} holds, then
\begin{equation}
 \label{eqn:toleran2}
 \frac{\Vert x - x_k \Vert_A - \cfgq_{k:k+d}^{1/2}}{\Vert x - x_k \Vert_A} < \tau,
\end{equation}
i.e., the $A$-norm $\Vert x - x_k \Vert_A=\varepsilon_{k}^{1/2}$ is approximated by $\cfgq_{k:k+d}^{1/2}$ with a relative accuracy less than $\tau$.

Using \eqref{eqn:dlocaldecrease}, one can rewrite \eqref{eqn:tolerance} as
\begin{equation}
 \label{eqn:tolerance2}
 \frac{\varepsilon_{k+d+1}}{\varepsilon_{k}} \leq \tau.
\end{equation}
It means that $d$ should be ideally chosen such that the error decreases sufficiently in $d+1$ iterations.
We now translate this requirement into the problem of choosing a proper value of~$d$.
%
%
\begin{definition}
 \label{def:ideal}
	We define the \emph{ideal value} of~$d$ at a given iteration~$k$ to be the minimal value of $d$ such that \eqref{eqn:tolerance2} holds, and denote it by $\widetilde{d}_k$. To simplify the notation we usually omit the subscript $k$.
\end{definition}

As demonstrated in \Cref{fig:01}, a constant value of $d$ will usually not be satisfactory,
where ``satisfactory'' means that it satisfies \eqref{eqn:tolerance2} at each iteration and avoids $\widetilde{d} \ll d$.
In other words, the value of $d$ should depend on the iteration number~$k$,
$d=d_k$ (again to simplify the notation we will omit the subscript $k$).
%
The adaptive choice of $d$ that reflects the required accuracy and tightly approximates $\widetilde{d}$ is a challenging problem we tackle in the rest of the section.

\subsection{An adaptive strategy for choosing~$d$}
 \label{sec:adaptd_later}

The strategy we propose is based on replacing the unknown errors in \eqref{eqn:tolerance2} by their estimates. For the denominator, we use the lower bound \eqref{eqn:Delta}, which is available at the iteration~$k+d$.
The approximation of the numerator using the available information is a complicated task. If some a~priori information is known, in particular a tight underestimate of the smallest
eigenvalue of~$A$, one may try to use the upper bound based on Gauss--Radau quadrature \cite{MeTi2019}.
However, as mentioned in \cite{MeTi2019} and discussed in more detail in Section~\ref{sec:adaptd_init}, even if we know the smallest eigenvalue to high accuracy, the Gauss--Radau upper bound is usually delayed in later iterations, and, therefore,
does not always provide a sufficiently accurate information on the current error.

Here our main idea is to bound the error~$\varepsilon_{k+d+1}$ using the available (poor) lower bound~$\cfgq_{k+d+1}$ that can be computed in the iteration~$k+d+1$, and a \emph{safety factor}~\mbox{$S>1$} (that again depends on~$k$) such that
\begin{equation}
 \label{eqn:H}
  \varepsilon_{k+d+1} \leq S  \cfgq_{k+d+1}.
\end{equation}
Having some heuristic value of $S$ that would ideally satisfy the
inequality \eqref{eqn:H} in hands at iteration~$k$, we choose $d$ as the minimal value satisfying
\begin{equation}
 \label{eqn:lowerb_strategy}
 \frac{S \cfgq_{k+d+1}}{\cfgq_{k:k+d}} \leq \tau.
\end{equation}
The following lemma shows that $S$ can be bounded from above by  $\kappa(A)$.

\begin{lemma}
 \label{lem:kappa}
In the notation introduced above, it holds that
$
    \varepsilon_{k} \leq \kappa(A) \cfgq_{k}.
$
\end{lemma}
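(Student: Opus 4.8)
The plan is to reduce the claimed inequality to the single ratio $\varepsilon_k/\Delta_k \le \kappa(A)$ and to bound it by combining an exact rewriting of $\varepsilon_k$ and $\Delta_k$ in terms of $r_k$ and $p_k$ with the special $A$-conjugate structure of the CG direction vectors. First I would use $r_k = b - Ax_k = A(x-x_k)$, so that $x-x_k = A^{-1}r_k$ and hence
\[
  \varepsilon_k = \|x-x_k\|_A^2 = (x-x_k)^T A (x-x_k) = r_k^T A^{-1} r_k .
\]
On the other side, the definition of $\alpha_k$ in \cref{alg:cg} gives $\alpha_k = \|r_k\|^2/(p_k^T A p_k)$, so that
\[
  \Delta_k = \alpha_k\|r_k\|^2 = \frac{\|r_k\|^4}{p_k^T A p_k}.
\]
With these two identities the target $\varepsilon_k \le \kappa(A)\Delta_k$ becomes the equivalent, purely algebraic statement $(r_k^T A^{-1} r_k)(p_k^T A p_k) \le \kappa(A)\,\|r_k\|^4$.

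The step I expect to be the crux is replacing $p_k^T A p_k$ by $r_k^T A r_k$, i.e. proving $p_k^T A p_k \le r_k^T A r_k$; a generic eigenvalue bound on $p_k^T A p_k$ is useless here because $\|p_k\|$ may well exceed $\|r_k\|$, so the CG recurrence must be exploited. The cleanest route I see uses the $A$-conjugacy $p_{k-1}^T A p_k = 0$: writing $p_k = r_k + \beta_k p_{k-1}$ and forming the inner product with $A p_k$ gives
\[
  p_k^T A p_k = r_k^T A p_k + \beta_k\, p_{k-1}^T A p_k = r_k^T A p_k .
\]
Applying the Cauchy--Schwarz inequality in the $A$-inner product to $r_k^T A p_k$ then yields $p_k^T A p_k = r_k^T A p_k \le (r_k^T A r_k)^{1/2}(p_k^T A p_k)^{1/2}$, whence $p_k^T A p_k \le r_k^T A r_k$ (the case $k=0$ being trivial, as $p_0 = r_0$). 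Consequently $\Delta_k \ge \|r_k\|^4/(r_k^T A r_k)$.

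It then remains to bound $(r_k^T A^{-1} r_k)(r_k^T A r_k) \le \kappa(A)\,\|r_k\|^4$, which is elementary: since $A$ is symmetric positive definite, $r_k^T A r_k \le \lambda_{\max}(A)\|r_k\|^2$ and $r_k^T A^{-1} r_k \le \lambda_{\min}(A)^{-1}\|r_k\|^2$, and multiplying these two estimates produces the factor $\lambda_{\max}(A)/\lambda_{\min}(A) = \kappa(A)$. Combining this with $\varepsilon_k = r_k^T A^{-1} r_k$ and $\Delta_k \ge \|r_k\|^4/(r_k^T A r_k)$ closes the argument. As an alternative to the conjugacy computation, one could instead invoke \cref{lem:basic} together with the $A$-optimality of the CG iterate over the Krylov subspace: since $x_k+\alpha r_k \in x_0+\mathcal{K}_{k+1}$, minimizing the error over $\alpha$ (a single steepest-descent step) gives $\Delta_k = \varepsilon_k-\varepsilon_{k+1} \ge \|r_k\|^4/(r_k^T A r_k)$, after which the same eigenvalue estimate finishes the proof.
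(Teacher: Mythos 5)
Your proof is correct and follows essentially the same route as the paper: both reduce the claim to $\Delta_k \ge \|r_k\|^4/(r_k^TAr_k)$ via the key inequality $p_k^TAp_k \le r_k^TAr_k$ and then finish with the extremal eigenvalue bounds on the quadratic forms in $r_k = A(x-x_k)$. The only difference is that the paper cites $p_k^TAp_k \le r_k^TAr_k$ from the literature (as a consequence of local orthogonality), whereas you supply a short derivation from the $A$-conjugacy $p_{k-1}^TAp_k=0$ plus Cauchy--Schwarz, which is a valid and self-contained way to obtain the same fact.
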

\begin{proof}
To get the lower bound on $\cfgq_{k} = \cfa_{k}\|r\|_{k}^{2}$, we use the inequality
\[
p_{k}^{T}Ap_{k}\leq r_{k}^{T}Ar_{k}
\]
which can be proved using only local orthogonality, that is, orthogonality of two consecutive vectors; see \cite[Lemma~2.31]{B:Me2006}. Denoting $\lambda_{\min}$ and $\lambda_{\max}$  the smallest and the largest eigenvalue of~$A$, it holds that
\[
 {\cfgq_{k}} = \frac{\|r_{k}\|^{2}}{p_{k}^{T}Ap_{k}}\|r_{k}\|^{2}\geq\frac{\|r_{k}\|^{2}}{r_{k}^{T}Ar_{k}}\|r_{k}\|^{2}\geq\frac{1}{\lambda_{\max}}\|r_{k}\|^{2}\geq\frac{\lambda_{\min}}{\lambda_{\max}} \| x - x_k \|_A^2,
\]
where we have used $r_k = b - Ax_k = A(x-x_k)$. Finally,
$
	{\cfgq_{k}} \geq \kappa(A)^{-1}\varepsilon_k.
$
\end{proof}

Note that the local orthogonality used in the proof of \Cref{lem:kappa} is well preserved during
finite precision computations; see \cite{StTi2002}.
Also, the recursively computed residual $r_k$ corresponds with the true residual $b-Ax_k$ during finite precision CG computations
until the ultimate level of accuracy is reached.
In summary, one can expect that the inequality in \Cref{lem:kappa} holds also during finite precision computations (up to some small unimportant inaccuracy) until the ultimate level of accuracy is reached.


Before introducing the formula for~$S$, we make one important remark.
At iteration $k+d+1$, a new term
$
 \cfgq_{k+d+1}
$
is available. Recalling relation \eqref{eqn:dlocaldecrease}, this term can
be added to all previous lower bounds on the errors to obtain better approximations. In \cite[Sect.~5.2]{StTi2005}, this technique is called ``reconstruction of the convergence curve''. In particular, the previous lower bounds on the errors $\varepsilon_{\ell}$ can be improved using all the available information by
\[
	\varepsilon_\ell \approx \sum^{{k+d+1}}_{j = \ell} \cfa_j \| r_j \|^2 = \cfgq_{\ell:{k+d+1}}, \qquad {0 \leq \ell \leq k+d},
\]
and $\cfgq_{\ell:{k+d+1}}$ represents again a lower bound on $\varepsilon_\ell$.

Our numerical experiments show that a proper value of $S$ can vary with the iterations. Therefore, our aim is to vary~$S$ based on the information we get from the previous iterations.
%
The choice of~$S$ should account for the underestimation of~$\varepsilon_{k+d+1}$ by the (simplest) lower bound~$\cfgq_{k+d+1}$. Therefore, at iteration~$k$ we choose it as the largest underestimation in some number of the latest iterations,
\begin{equation}
 \label{eqn:chooseH}
 S \equiv \max_{m \leq \ell \leq k+d} \frac{{\cfgq_{\ell:k+d+1}}}{\cfgq_{\ell}}
  \approx \max_{m \leq \ell \leq k+d} \frac{\varepsilon_{\ell}}{\cfgq_{\ell}}.
\end{equation}
Here~$m$ could be set as 0, meaning that the 
overall convergence history is used. However, then $S$ would be nondecreasing, which does not reflect the observed behavior. Therefore, we use information only from
the latest iterations that caused a significant decrease in the error from iteration $m$ to iteration~$k$, say,
about four orders of magnitude (i.e.~two orders of magnitude in the $A$-norm of the error vector $x-x_k$). In other words, we define  $m$
to be the largest $\ell$, $0\leq \ell <k$ such that
\begin{equation}
 \label{eqn:Happrox}
 \frac{\cfgq_{k:k+d+1}}{\cfgq_{\ell:k+d+1}} \leq \mbox{TOL},\qquad
 \mbox{TOL} \equiv 10^{-4}.
\end{equation}
If the condition \eqref{eqn:Happrox} is not satisfied for any admissible $\ell$, we define $m=0$. In this way, only the latest part of the reconstructed convergence curve is used to determine~$S$.

Note that the computation of the bound using \eqref{eqn:Delta} requires~$d$ additional CG iterations. In practice, however, one runs the CG algorithm, and estimates the error in a backward way, i.e., $d$~iterations back.
The CG algorithm with the adaptive choice of $d$ based on \eqref{eqn:Delta} and \eqref{eqn:chooseH}
is given in~\Cref{alg:pseudo}. When a stopping criterion is satisfied
on line 14 of \Cref{alg:pseudo}, one can use the latest available approximate solution~$x_{\ell}$, thanks to the monotonicity~\eqref{eqn:monoCG} of errors in CG.



\begin{algorithm}
\caption{{CG with the adaptive choice of $d$}} \label{alg:pseudo}
\algsetup{indent=2em}
\begin{algorithmic}[1]

\STATE \textbf{input} $A$, $b$, $x_{0}$, $\tau$, $\mbox{TOL}$

\STATE $r_{0}=b-Ax_{0}$, $p_{0}=r_{0}$

\STATE {${d=0}$, $k=0$}

\FOR{{$\ell=0,\dots,$}}

\STATE {\tt cgiter($\ell$)} 
\STATE compute~$\cfgq_{\ell}$

\IF{$\ell>0$}
\STATE compute~$\cfgq_{k:k+d}$
\STATE determine~$m$ and $S$ using  \eqref{eqn:Happrox} and \eqref{eqn:chooseH}
\WHILE{$d\geq 0$ and \eqref{eqn:lowerb_strategy}}

\STATE accept $\cfgq_{k:k+d}$ as an estimate for $\varepsilon_k$	

\STATE $k = k + 1$, ${d=d-1}$

\STATE compute $\Delta_{k:k+d}$, if $d\geq 0$
\ENDWHILE

\STATE use the latest estimate $\Delta_{k-1:k+d}$ in stopping criteria, if $k > 0$
\STATE {${d=d+1}$}

\ENDIF
\ENDFOR


\end{algorithmic}
\end{algorithm}

Note that on line 11 of \Cref{alg:pseudo} we accept $\Delta_{k:k+d}$ as an estimate
of $\varepsilon_k$. However, at this moment we can compute a better approximation
to~$\varepsilon_k$ given by $\Delta_{k:k+d+1}$ which should be used in practical computations.
Nevertheless, for the sake of consistency with numerical experiments where we check the inequality \eqref{eqn:tolerance}, we store here $\Delta_{k:k+d}$. On line 11 we can also store
the corresponding value of $d_k=d$.
On line 15, the latest estimate $\Delta_{k-1:k+d}$ (after the update of~$k$ on line 12) can be used as a guaranteed lower bound and $\Delta_{k-1:k+d}/ (1 - \tau)$ as a heuristic upper bound on~$\varepsilon_{k-1}$.
Note also that on lines 7-15, it always holds $k +d = \ell-1$ so that
$\cfgq_{k+d+1} = \cfgq_{\ell}$. 
If one wants to fix the smallest value of~$d$
to guarantee that information from at least, say~$d_{\min}$,
forthcoming iterations is always used to approximate~$\AN{k}$,
then one can replace the condition $d\geq 0$ on line 10 by the condition $d\geq d_{\min}$.

\section{Modifications of the algorithms for preconditioned CG}
 \label{sec:PCG}
In the standard view of preconditioning, the CG method is thought of as being applied to a ``preconditioned'' system
\begin{eqnarray}
 \label{EQP01}
        &&\hat{A} \hat{x} = \hat{b}, \qquad
        \hat{A} = L^{-1} A L^{-T},\quad \hat{b} =L^{-1} b,
\end{eqnarray}
where $L$ represents a  nonsingular (eventually lower triangular) matrix. Denoting the corresponding CG coefficients and vectors
with a hat and defining
\begin{eqnarray}
 \label{EQD01}
       x_k \equiv L^{-T}\hat{x}_k,\ \
        r_k \equiv L\,\hat{r}_k,\ \
        p_k\equiv L^{-T}\hat{p}_k,\ \
        z_k \equiv L^{-T} L^{-1} r_k \equiv M^{-1}r_k,\nonumber
\end{eqnarray}
    (here $x_k$ and $r_k$ represent the approximate solution and residual for the original problem $Ax=b$),
    we obtain the standard version of the preconditioned CG (PCG) method which involves only $M=LL^T$; for more details see,
    e.g., \cite{Me1999, StTi2005, B:Me2006}.
The preconditioner $M$
should be chosen such that the linear system with the matrix $M$ is easy to solve, while the matrix $L^{-1} A L^{-T}$ ensures fast convergence of PCG.

\begin{algorithm}[ht]
\caption{Preconditioned CG (PCG) algorithm} \label{alg:pcg}
\begin{algorithmic}[0]
\STATE \textbf{input} $A$, $b$, $x_{0}$, $M$
\STATE $r_{0}=b-Ax_{0}$
\STATE $z_{0} = M^{-1}r_{0}$, $p_{0}=z_{0}$
\FOR{$k=0,\dots$ until convergence}
\STATE $\hat{\cfa}_{k}=\frac{z_{k}^{T}r_{k}}{p_{k}^{T}Ap_{k}}$
\STATE $x_{k+1}=x_{k}+\hat{\cfa}_{k}p_{k}$
\STATE $r_{k+1}=r_{k}-\hat{\cfa}_{k}Ap_{k}$
\STATE Solve $M z_{k+1}= r_{k+1}$
     \hspace*{15mm}%
        \smash{$\left.\begin{array}{@{}c@{}}\\{}\\{}\\{}\\{}\\{}\\{}\end{array}
        \right\} \begin{tabular}{l}{\tt pcgiter(k)}\end{tabular}$}
\STATE $\hat{\cfb}_{k+1}=\frac{z_{k+1}^{T}r_{k+1}}{z_{k}^{T}r_{k}}$
\STATE $p_{k+1}=z_{k+1}+\hat{\cfb}_{k}p_{k}$
 \ENDFOR
\end{algorithmic}%
\end{algorithm}

Since
$$
        \|\hat{r}_k\|^2 = r_k^T L^{-T}L^{-1}r_k = r_k^T M^{-1} r_k = z_k^T r_k\,
$$
and
$$
    \|\hat{x}-\hat{x}_k\|_{\hat{A}}^2     =
                (L^Tx-L^Tx_k)^TL^{-1}AL^{-T}(L^Tx-L^Tx_k)=\|x-x_k\|_{A}^2,
$$
the $A$-norm of the error vector $x-x_k$ in PCG can be estimated similarly as in classical CG.
In particular,
    \eqref{eqn:dlocaldecrease} takes the form
\begin{equation}
 \label{eqn:decreasePCG}
 \AN{k} \;=\; \hat{\cfgq}_{k:k+d} \;+\; \AN{k+d+1}\,,
\end{equation}
where
\begin{equation}
 \label{eqn:DeltaPCG}
 \hat{\cfgq}_{k:k+d} \equiv \sum^{k+d}_{j=k} \hat{\cfa}_{j} z_j^T r_j .
\end{equation}
Therefore, in PCG we can compute the lower bounds
using the PCG coefficients $\hat{\cfa}_{k}$ and inner products $z_k^T r_k$ (instead of using $\Vert \hat{r}_k\Vert^2$) that are computed anyway in the forthcoming PCG iterations. In other words, the lower bound \eqref{eqn:DeltaPCG} is still easy and cheap to evaluate. Note that the safety factor $S$ is now bounded by the condition number of the preconditioned matrix; see \cref{lem:kappa}.


\section{Improvements}
 \label{sec:extensions}
In this section we comment on several modifications that can be helpful in some difficult cases.
\subsection{The initial choice of~$d$}
 \label{sec:adaptd_init}
\cref{alg:pseudo} is very simple, and it provides fairly good results
in most of the situations; see \Cref{sec:experiments} for numerical experiments.
However, in some cases we can observe a long quasi-stagnation phase of the $A$-norm of the error vector during the initial iterations of (P)CG; see, e.g., the example {\tt s3dkt3m2} in \Cref{fig:numexp_03}. Then, it can happen that
the strategy suggested in \cref{alg:pseudo}
may not detect the quasi-stagnation, and the chosen value of~$d_k$ is typically significantly smaller than the ideal~$\widetilde{d}_k$. As a result, the $A$-norm of the error vector can be  underestimated by several orders of magnitude.
The large underestimation can last typically until (P)CG starts to converge faster.
%
%
To overcome this difficulty, we propose a safer strategy for the choice of the initial value of $d$ rather than starting from $d=0$.

Our aim is to find $d>0$ such that
\[
\frac{\varepsilon_{d}}{\varepsilon_{0}} < \tau,
\]
where $\tau$ is the prescribed tolerance. While $\varepsilon_{0}$ can be approximated as above by its lower
bound $\cfgq_{0:d}$, we need to obtain a convenient approximation to $\varepsilon_{d}$, ideally an upper bound.

If some underestimate of the smallest eigenvalue $\lambda_{\min}$ of the (preconditioned) system matrix is known, one can use upper bounds on the  error $\varepsilon_{d}$ discussed in~\cite{MeTi2019}.
In particular, if $0<\mu\leq \lambda_{\min}$ is given, then
\begin{equation}
 \label{eq:upper}
 \varepsilon_{d} < \cfa_{d}^{\smu}\|r_{d}\|^{2} < \frac{\|r_{d}\|^{2}}{\mu}\frac{\|r_{d}\|^{2}}{\|p_{d}\|^{2}}
\end{equation}
where $\cfa_{d}^{\smu}$ can be computed recursively using
\begin{equation}
 \label{eq:gamma}
 \cfa_{j+1}^{\smu}=\frac{\left(\cfa_{j}^{\smu}-\cfa_{j}\right)}{\mu\left(\cfa_{j}^{\smu}-\cfa_{j}\right)+\cfb_{j+1}},\quad\cfa_{0}^{\smu}=\frac{1}{\mu},
 \quad j=1,\dots,d-1.
\end{equation}
While the first bound in \eqref{eq:upper},
\begin{equation}
 \label{eq:GR}
 \omega_d^{\klein{(\mu)}} \equiv \cfa_{d}^{\smu}\|r_{d}\|^{2}
\end{equation}
cannot be used as an approximation to $\varepsilon_{d}$ if $\mu>\lambda_{\min}$, the second bound
in \eqref{eq:upper} can still serve as an approximation to
$\varepsilon_{d}$ for any $\mu\approx \lambda_{\min}$; see \cite{MeTi2019}.
Note that there are several applications in the context
of numerical solving of PDE's,
see, e.g., \cite{GeMaNiBjSt2019,GeNiBjSt2020,KuPu2020}, where a tight underestimate of $\lambda_{\min}$ can always be determined a priori.

If no a priori information about the smallest eigenvalue
$\lambda_{\min}$ is known, we can approximate it from the (P)CG process as described in \cite{MeTi2019}.
%
%
%
%
In particular, denoting the initial values
\[
{\rho}_{0}=\cfa_0, \ {\tau}_{0}={\rho}_{0},
\ {\sigma}_{0}=0, {s}_{0}=0,\ {c}_{0}=1,\ \pi_0 = 1,
\]
by {\tt incremental()}, the smallest eigenvalue $\lambda_{\min}$ of the (preconditioned) matrix can be approximated
incrementally from the (P)CG coefficients, using the recurrences
denoted as {\tt incremental($k$)}:
\begin{eqnarray*}
{\sigma}_{k}&=&-\sqrt{\cfa_{k}\frac{\cfb_{k}}{\cfa_{k-1}}}\left({s}_{k-1}{\sigma}_{k-1}+{c}_{k-1}{\tau}_{k-1}\right),\ \
{\tau}_{k}=\cfa_{k}\left(\cfb_{k}\frac{{\tau}_{k-1}}{\cfa_{k-1}}+1\right),\\
{\chi}_{k}^{2}&=&\left({\rho}_{k-1}-{\tau}_{k}\right)^{2}+4{\sigma}_{k}^{2},\ \
{c}_{k}^{2}=\frac{1}{2}\left(1-\frac{{\rho}_{k-1}-{\tau}_{k}}{{\chi}_{k}}\right),\\
{\rho}_{k}&=&{\rho}_{k-1}+{\chi}_{k}{c}_{k}^{2},\\
{s}_{k}&=&\sqrt{1-{c}_{k}^{2}},\ \
{c}_{k}=|{c}_{k}|\,\mathrm{sign}({\sigma}_{k}),\\
\mu_k &=& {\rho}_{k}^{-1},\\
\pi_k &=& \frac{\pi_{k-1}}{\pi_{k-1}+\cfb_k}.
\end{eqnarray*}
%
Using the idea of incremental norm estimation, the above algorithm aims to approximate the smallest Ritz value.
The updated value $\mu_k$ estimates the smallest Ritz value from above, and, therefore, $\mu_k > \lambda_{\min}$.
Numerical experiments in \cite{MeTi2019} predict that for $k$ sufficiently large
$\mu_k$ usually approximates the smallest Ritz value to one or two valid digits.
Since $\pi_k = \Vert r_k \Vert^2/ \Vert {p_k} \Vert^2$, we can define the quantity
\begin{equation}
 \label{eqn:aUB}
 \MT{k} \equiv \frac{\pi_k}{\mu_k}\|r_k\|^2
\end{equation}
with the idea of approximating the rightmost upper bound in \eqref{eq:upper};
see \cite{MeTi2019}.
In the initial stage of convergence, the smallest Ritz value (and, therefore, also $\mu_k$) is a poor approximation to the smallest eigenvalue $\lambda_{\min}$. Therefore, we cannot expect that $\mu_k \approx\lambda_{\min}$, and
$\MT{k}$ typically underestimates $\varepsilon_{k}$ by several orders of magnitude.
However, as soon as the smallest Ritz value
starts to be a fair approximation of $\lambda_{\min}$, one can expect that $\MT{k}$ is an upper bound on $\AN{k}$. Note that if $\mu \leq \lambda_{\min}$ is available, then one can replace~$\mu_k$ in \eqref{eqn:aUB} by $\mu$, and
$\MT{d}$ would represent a guaranteed upper bound on $\AN{d}$.

\begin{algorithm}[ht]
\caption{CG with the adaptive choice of $d$ and the initial phase} \label{alg:pseudofull}
\algsetup{indent=2em}
\begin{algorithmic}[1]

\STATE \textbf{input} $A$, $b$, $x_{0}$, $\tau$, $\mathrm{TOL}$

\STATE $r_0 = b-Ax_0$, $p_0=r_0$
\STATE ${d=0}$, $k=0$
\STATE {\tt incremental()}

\STATE {\tt initial} = true

\FOR{$\ell=0,\dots,$}

\STATE {\tt cgiter($\ell$)}

\IF{{\tt initial}}

\STATE {\tt incremental($\ell$)}
\STATE compute~$\MT{d}$ using~\eqref{eqn:aUB}
\IF{\eqref{eqn:initial}}
	\STATE {\tt initial} = false
\ELSE
	\STATE ${d=d+1}$
\ENDIF

\ELSE
\STATE use code on lines 8-16 of \Cref{alg:pseudo}
\ENDIF
\ENDFOR

\end{algorithmic}
\end{algorithm}

As an improved strategy, we suggest  to choose the initial value of $d$ such that
\begin{equation}
 \label{eqn:initial}
 \frac{\varepsilon_{d}}{\varepsilon_{0}} \approx \frac{\MT{d}}{\cfgq_{0:d}} < \tau;
\end{equation}
see \Cref{alg:pseudofull}.
As already mentioned, in the initial stage of convergence, the quantity
$\MT{d}$~can underestimate $\varepsilon_{d}$ by several orders of magnitude.
Despite this fact, the ratio ${\MT{d}}/{\cfgq_{0:d}}$ often represents
an upper bound on $\varepsilon_{d}/\varepsilon_{0}$. To get an idea why it is so, let us assume that the
error stagnates in the initial stage of convergence.
Then both $\MT{d}$ as well as $\cfgq_{0:d}$ substantially underestimate the target quantities, but the underestimation using $\MT{d}$ is relatively smaller than the one based on $\cfgq_{0:d}$.
As a result, the ratio of the two underestimates is an upper bound on
$\AN{d}/\AN{0}$.
The above considerations are purely heuristic, but work satisfactory in all of our experiments.

%
%

\subsection{Using (the approximation of) the upper bound} \label{sec:GRupper}
The quantity $\MT{k}$ defined by \eqref{eqn:aUB} can also be used to approximate the ratio in
\eqref{eqn:tolerance2}, leading to another strategy for the adaptive choice of $d$. However, as already mentioned in
\Cref{sec:adaptd_later}, even if we know the smallest eigenvalue to a high accuracy, the bounds and approximations presented in
\eqref{eq:upper} and \eqref{eqn:aUB} often do not provide relevant information on the current error $\AN{k}$ in the final stage of convergence.
In some cases they can be delayed several hundreds of iterations over $\AN{k}$. As a result, an adaptive strategy based on approximating
$\AN{k+d+1}$ by $\MT{k+d+1}$ in  \eqref{eqn:tolerance2} would use values of $d$ larger than necessary.

\begin{figure}[!htbp]
 \label{fig:02}
\centering
 \includegraphics[width=9cm]{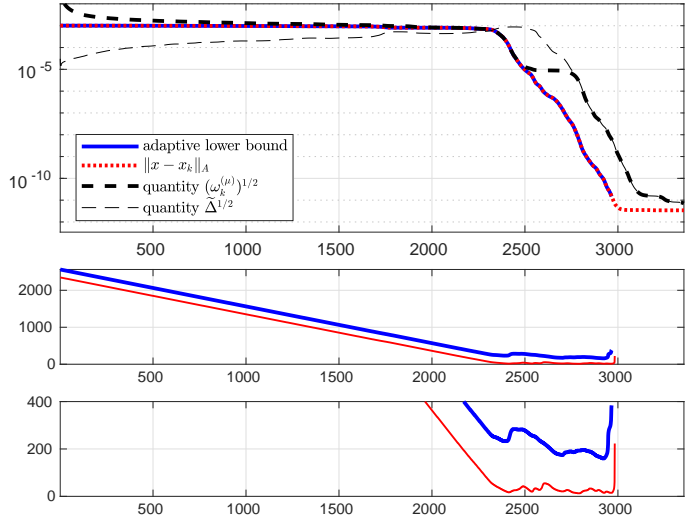}
\caption{Matrix {\tt s3dkt3m2}, top part:
the $A$-norm of the error vector $x-x_k$ (dots), the square root of the upper bound
$\omega_d^{\klein{(\mu)}}$ (bold dashed), the quantity $\MT{k}^{1/2}$ (dashed),
and the lower bound with the adaptive choice of $d$ based on
replacing $\AN{k+d+1}$ in \eqref{eqn:tolerance2} by $\MT{k+d+1}$ (blue solid curve). Bottom parts: the adaptive value $d_k$ (blue bold solid curve) and the ideal value~$\widetilde{d}_k$ (red solid curve).}
\end{figure}
This is clearly demonstrated in  \Cref{fig:02}, where the example {\tt s3dkt3m2} is considered. In the top part of the figure we
 plot the $A$-norm of $x-x_k$ (dotted curve), the quantity $\MT{k}^{1/2}$ (dashed curve) defined using \eqref{eqn:aUB},
and the square root of the Gauss--Radau upper bound $\omega_k^{\klein{(\mu)}}$ defined in \eqref{eq:GR} (bold dashed curve).
%
%
Note that to compute the upper bound $\omega_k^{\klein{(\mu)}}$, we have to provide a tight
underestimate $\mu$ of the smallest eigenvalue of the preconditioned matrix.
For experimental reasons, we computed the smallest Ritz value at an iteration~$k$ in which the ultimate level of accuracy was reached, and got a very accurate approximation to the smallest eigenvalue of the preconditioned matrix.
Then we defined $\mu$ to be this computed approximation to the smallest eigenvalue divided by $1+10^{-4}$. We set $\tau=0.25$.
If we use the quantity $\MT{k+d+1}$ to approximate $\AN{k+d+1}$
in the ratio \eqref{eqn:tolerance2}, and choose $d$ such that the approximated radio is less than $\tau$, we obtain very tight lower bounds (blue solid curve). However, in the final stage of convergence that is very important for stopping the iterations, such an  adaptively chosen $d$ differs from the ideal $\widetilde{d}_k$ by about 200 (the bottom part). In other words, we compute about 200 iterations of P(CG) more than necessary to reach a given
level of accuracy.
Note that very similar results will be obtained when using the Gauss--Radau upper bound \eqref{eq:GR} to approximate $\AN{k+d+1}$ in the ratio \eqref{eqn:tolerance2}, since {normal and bold dashed curves almost coincide in the final stage of convergence. We will show how to fix these problems with the Gauss--Radau upper bound in a forthcoming paper.

In summary, the strategy for the adaptive choice of $d$ based
on replacing $\AN{k+d+1}$ by $\MT{k+d+1}$ or by $\omega_{k+d+1}^{\klein{(\mu)}}$ in \eqref{eqn:tolerance2}, seems to be a safe strategy
in the sense that it usually produces $d$ larger than their ideal counterparts. For such $d$'s, the inequality \eqref{eqn:tolerance2} is satisfied.
%
%
On the other hand, it can use values of $d$ that are significantly larger than
necessary to reach
the prescribed accuracy of the estimate $\Delta_{k:k+d}$.
In particular, we observed unnecessarily large values of $d$ in the later (P)CG convergence phase.
Therefore, it is better to use the strategy of \Cref{alg:pseudo}, eventually combined with the strategy for choosing the initial value of $d$.

\section{Numerical experiments}
 \label{sec:experiments}

In the following numerical experiments we consider a set of test problems from the SuiteSparse Matrix collection, listed in \Cref{tab:test_problems}. We believe that the selected problems represent the most relevant scenarios one can face in practical computations. The experiments were run using double precision in MATLAB R2019b.

\begin{table}[htp]
\centering
\begin{tabular}{>{\tt}lrcc}
\textrm{name} & size & rhs $b$ & precond.~$M=LL^T$ \\ \hline
bcsstk02 & 66 & \rdelim \} {3}{10em}[{~equal components}] & --- \\

bcsstk04 & 132 &  & ---\\
bcsstk09 & 1083 &  & ict(1e-3, 1e-2)\\

s3dkt3m2 & 90\,449 & comes with the matrix & ict(1e-5, 1e-2)\\
s3dkq4m2 & 90\,449 &  \rdelim \} {5}{10em}[~$\mbox{rand}(-1,1)$] & ict(1e-5, 1e-2)\\ 
pwtk & 217\,918 &  & ict(1e-5, 1e-1)\\
af\_shell3 & 504\,855 &  & zero-fill\\ 
tmt\_sym & 726\,713 &  & zero-fill\\
ldoor & 952\,203 &  & zero-fill\\[0.4cm]
\end{tabular}
\caption{A set of test problems from the SuiteSparse Matrix collection.}
\label{tab:test_problems}
\end{table}

When it is not provided with the matrix, the right-hand side $b$
is chosen such that $b$ has equal components in the eigenvector basis
 or such that the entries of $b$ are randomly generated in the interval $(-1,1)$. In both cases, $b$ is normalized to have the unit norm $\Vert b \Vert =1$.
The right-hand side for {\tt s3dkt3m2} with only
the last element nonzero comes with the application; see \cite{Ko1999}.

The preconditioners are determined by the incomplete Cholesky
factorization (using the {\tt ichol} command),
either with zero-fill or with threshold dropping (ict)
where the first parameter is the
drop tolerance and the second parameter is the global diagonal shift;
for more details see the MATLAB documentation.

In \Crefrange{fig:numexp_01}{fig:numexp_04} we test the accuracy of the lower bound $\Delta_{k:k+d_k}^{\klein{1/2}}$
as an approximation of the $A$-norm of the error vector $x-x_k$, $\| x - x_k \|_A = \AN{k}^{\klein{1/2}}$, 
where $d_k$ is determined as in \Cref{alg:pseudo}. In the top parts of \Crefrange{fig:numexp_01}{fig:numexp_04} we plot
the lower bound $\Delta_{k:k+d_k}^{\klein{1/2}}$ (blue solid curve)
together with $\| x - x_k \|_A$ (red dots).
In the middle parts of figures we plot
the relative error
$$
\frac{\AN{k}-\cfgq_{k:k+d_k}}{\AN{k}}
$$
(blue solid curve) together with the prescribed tolerance $\tau = 0.25$ (dotted line); see~\eqref{eqn:tolerance}. Finally, in the bottom parts of \Crefrange{fig:numexp_01}{fig:numexp_04} we plot the value of~$d_k$
determined by \Cref{alg:pseudo},
and compare it with the ideal value $\widetilde{d}_k$ of \Cref{def:ideal}. Note that the ideal value $\widetilde{d}_k$ was determined using
the quantities $\AN{k}$ that are not known in practical computations.

\begin{figure}
\centering
\begin{minipage}[c]{0.49\linewidth}
\centering
{\tt bcsstk02}\\[0.2cm]
\includegraphics[width=\linewidth]{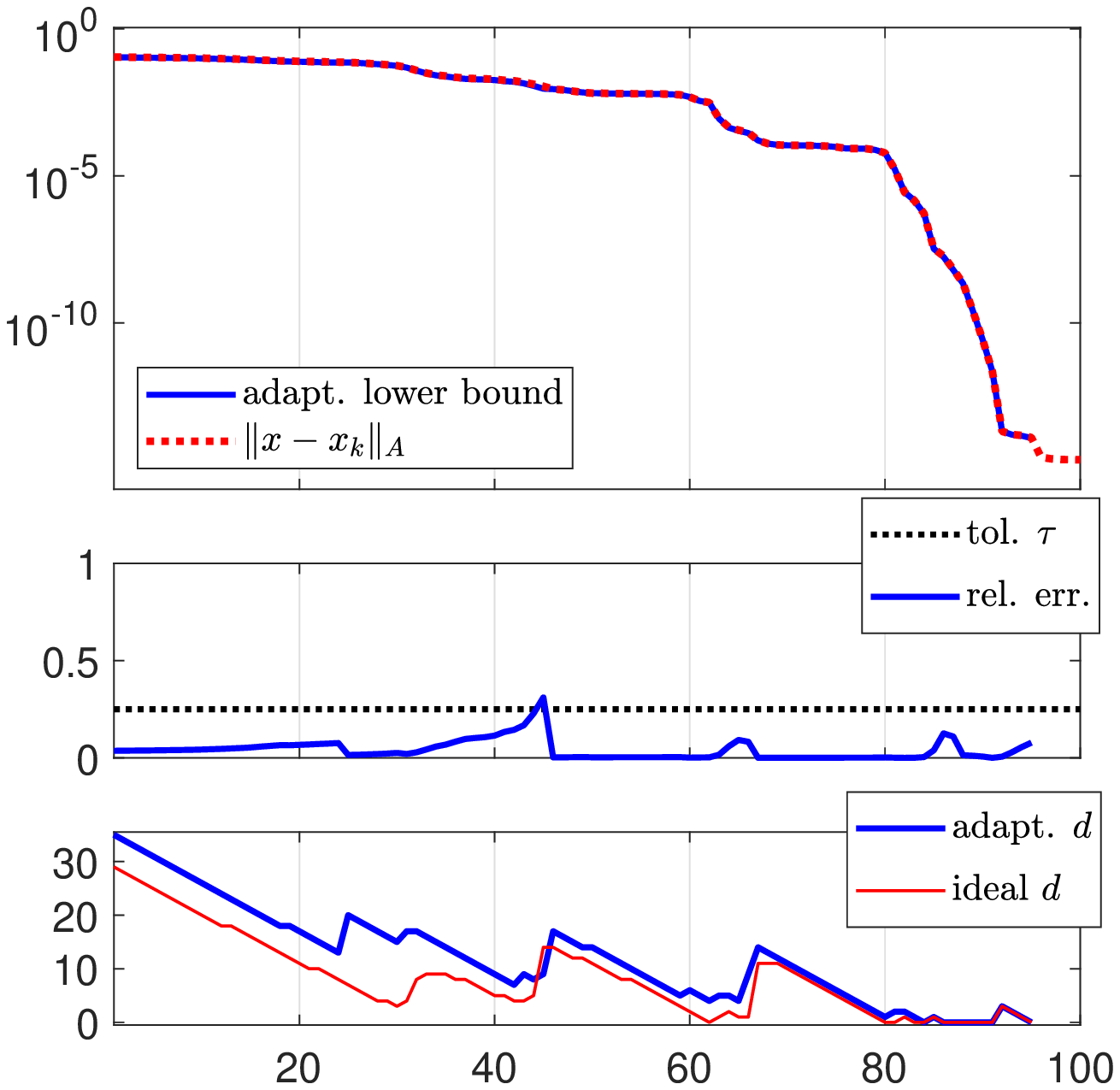}\\
CG iterations
\end{minipage}
 \hfill
\begin{minipage}[c]{0.49\linewidth}
\centering
{\tt bcsstk04}\\[0.2cm]
\includegraphics[width=\linewidth]{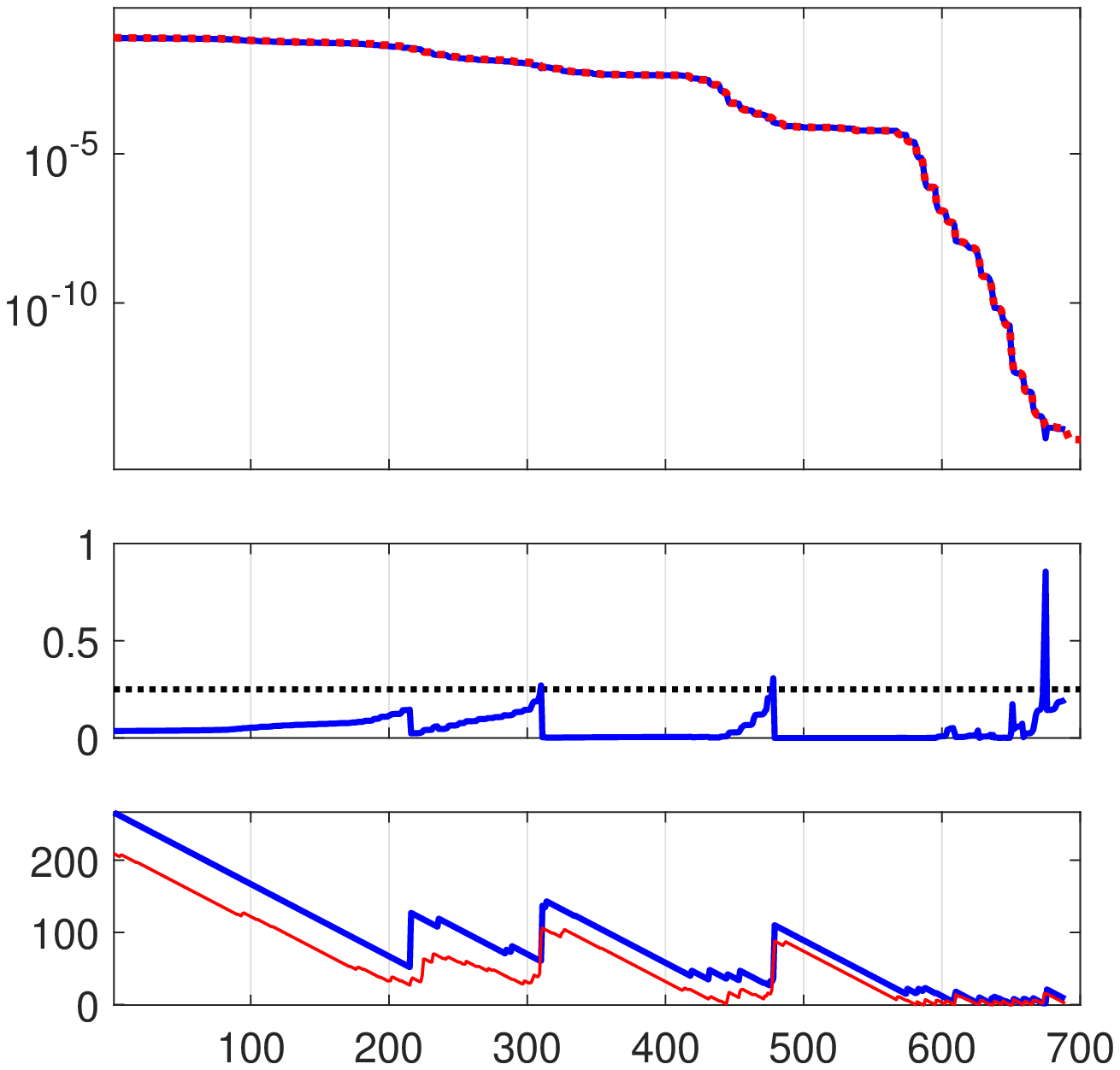}\\
CG iterations
\end{minipage}

\caption{Matrices {\tt bcsstk02} and {\tt bcsstk04}: the $A$-norm of the error vector $x-x_k$ and the adaptive lower bound (top part), the relative error and the prescribed tolerance from~\eqref{eqn:tolerance} (middle part), the value $d_k$ and the ideal value~$\widetilde{d}_k$ (bottom part).} \label{fig:numexp_01}
\end{figure}

For the first two test problems (\Cref{fig:numexp_01}), the unpreconditioned CG method needs
significantly more iterations than what is the size of the problem
to reach the ultimate level of accuracy.
In other words, convergence is substantially delayed due to finite precision arithmetic.
We choose these examples to demonstrate that
the estimates and techniques for the adaptive choice of $d$ work
well also in finite precision. This is actually no surprise.
Using results of \cite{StTi2002} we know that the estimation process is based on identities that do hold (up to some small inaccuracy) also during finite precision computations, despite the loss of orthogonality, until the ultimate level of accuracy is reached. One can observe that the required accuracy of estimates is reached and that the value of $d_k$ is  close to the ideal value $\widetilde{d}_k$ in almost all iterations.

\begin{figure}[htp]
\centering
\begin{minipage}[c]{0.49\linewidth}
\centering
{\tt bcsstk09}+no precond.\\[0.2cm]
\includegraphics[width=\linewidth]{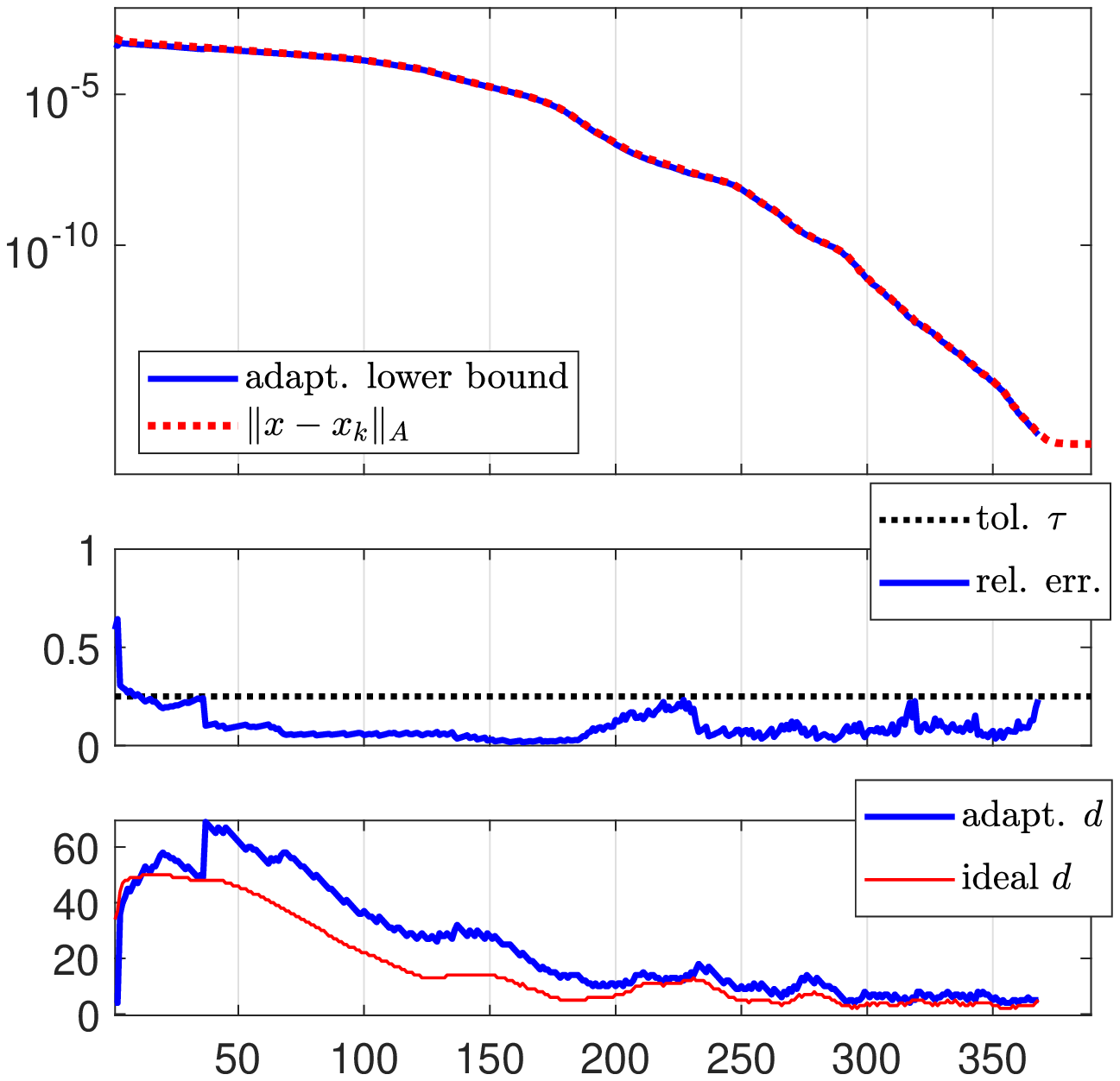}\\
CG iterations
\end{minipage}
 \hfill
\begin{minipage}[c]{0.49\linewidth}
\centering
{\tt bcsstk09}+ict(1e-3, 1e-2)\\[0.2cm]
\includegraphics[width=\linewidth]{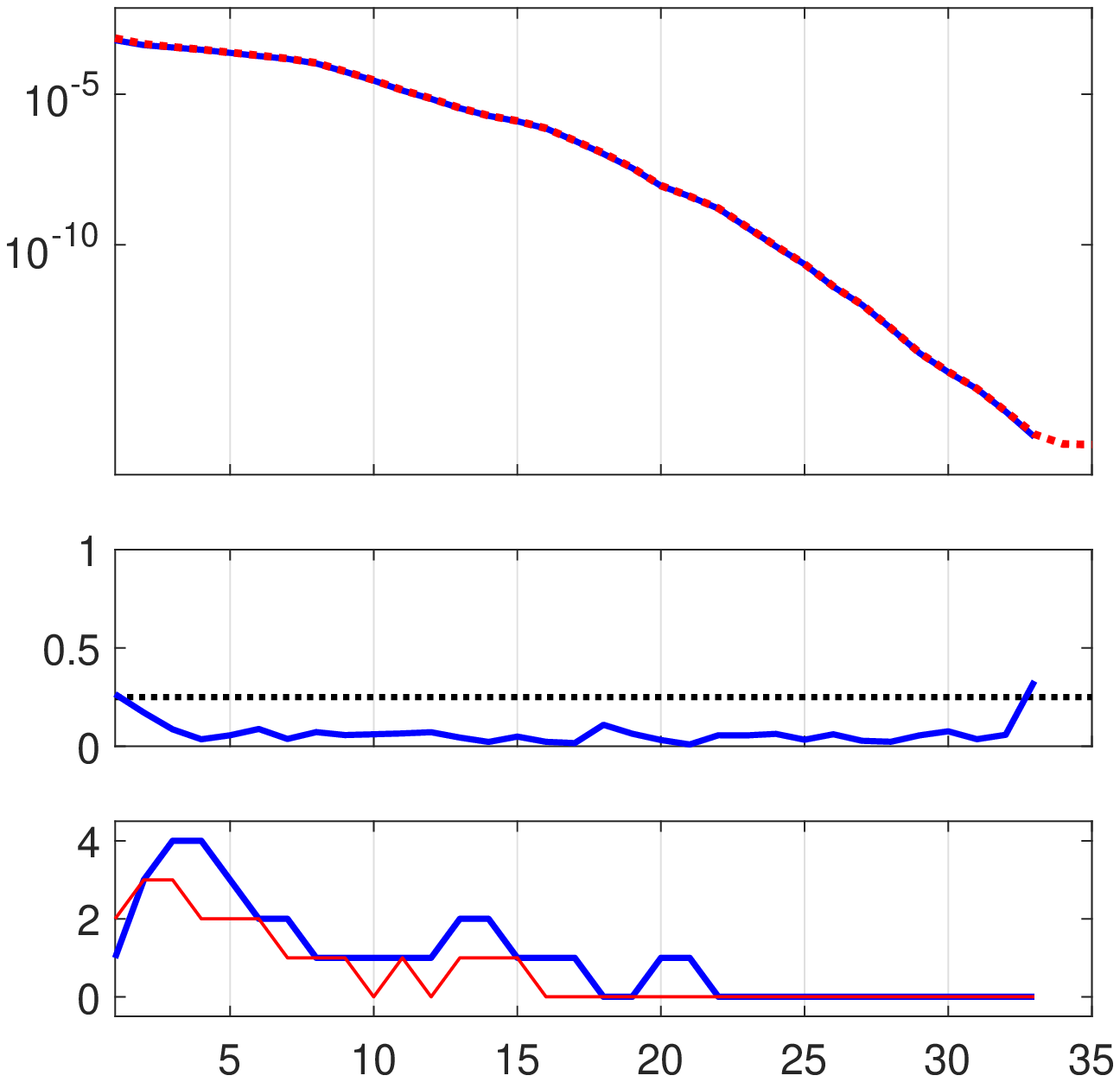}\\
PCG iterations
\end{minipage}

\caption{Matrix {\tt bcsstk09} without and with the preconditioner: the $A$-norm of the error vector $x-x_k$ and the adaptive lower bound (top part), the relative error and the prescribed tolerance from~\eqref{eqn:tolerance} (middle part), the value $d_k$ and the ideal value~$\widetilde{d}_k$ (bottom part).} \label{fig:numexp_02}
\end{figure}

In \Cref{fig:numexp_02} we test the performance of the adaptive procedure by considering the test problem {\tt bcsstk09} of size~$1083$ with and without a preconditioner. Similarly as in the previous example, the adaptive strategy of \Cref{alg:pseudo} works satisfactorily and provides estimates with the prescribed accuracy.

In the unpreconditioned case, the convergence is at first quite slow so that one needs larger values of $d_k$ (around 50) to reach the prescribed accuracy. As soon as convergence accelerates (around iteration 200), just a moderate value of $d_k$ (around 10) is needed.
Our strategy perfectly captures this behavior, though in the beginning
$d_k$ overestimates the ideal value $\widetilde{d}_k$ moderately. However, this should not represent
a serious drawback in practical computations since the iterations will probably be stopped after iteration 200, when $d_k$ and the ideal $\widetilde{d}_k$ almost coincide.
If stopped earlier, we just obtain a more accurate estimate than necessary.

In the preconditioned case, convergence is fast and the adaptive
strategy provides values of $d_k$ that are almost identical to
 $\widetilde{d}_k$. This is in particular true in the final convergence phase when $\widetilde{d}_k=0$ so that PCG uses only terms from the current iteration. Hence, in this example, the adaptive strategy works very well in the case of fast convergence and no unnecessary iterations are needed.

\begin{figure}
\centering
\begin{minipage}[c]{0.49\linewidth}
\centering
{\tt pwtk}\\[0.2cm]
\includegraphics[width=\linewidth]{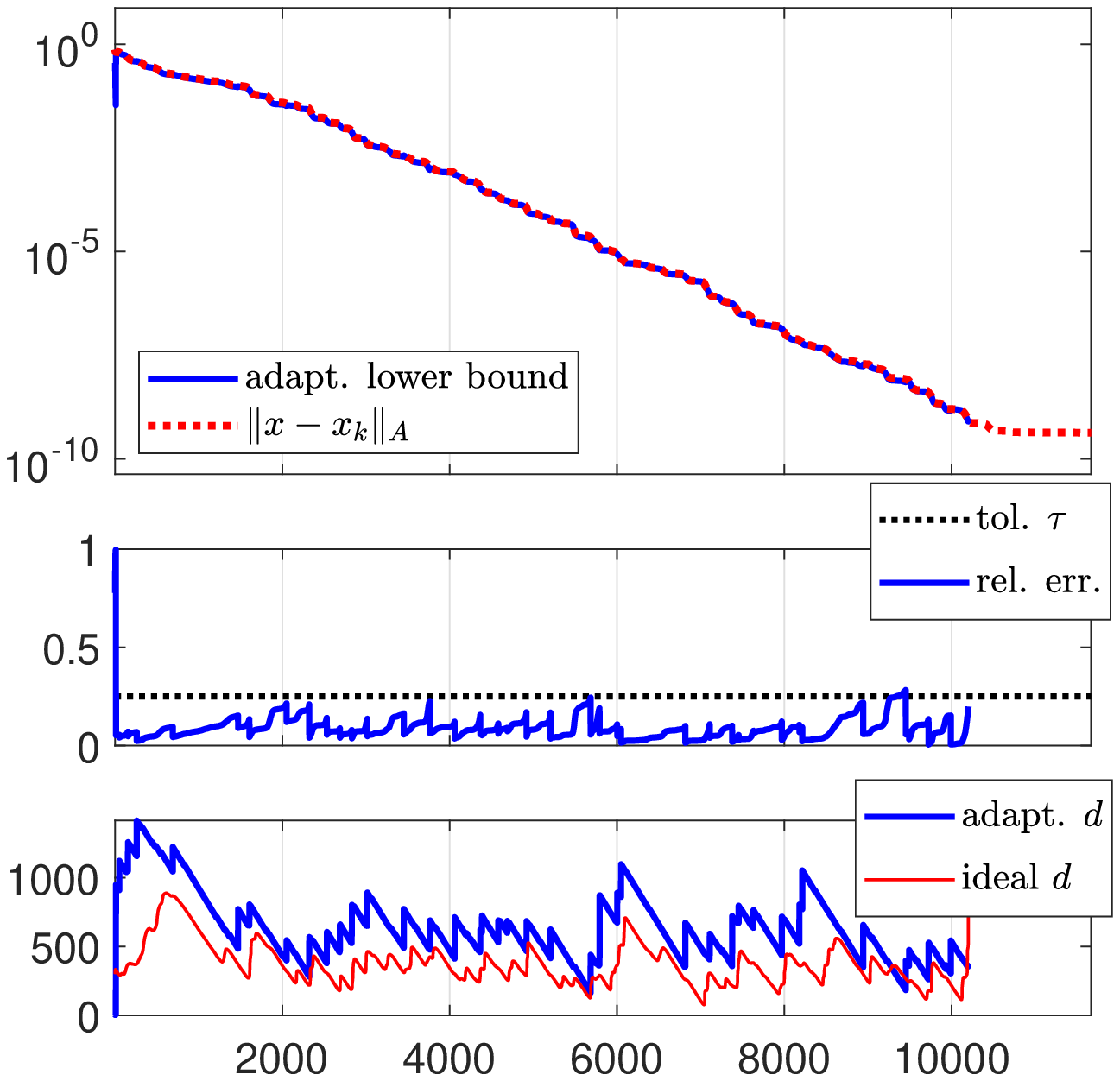}\\
PCG iterations
\end{minipage}
 \hfill
\begin{minipage}[c]{0.49\linewidth}
\centering
{\tt af\_shell3}\\[0.2cm]
\includegraphics[width=\linewidth]{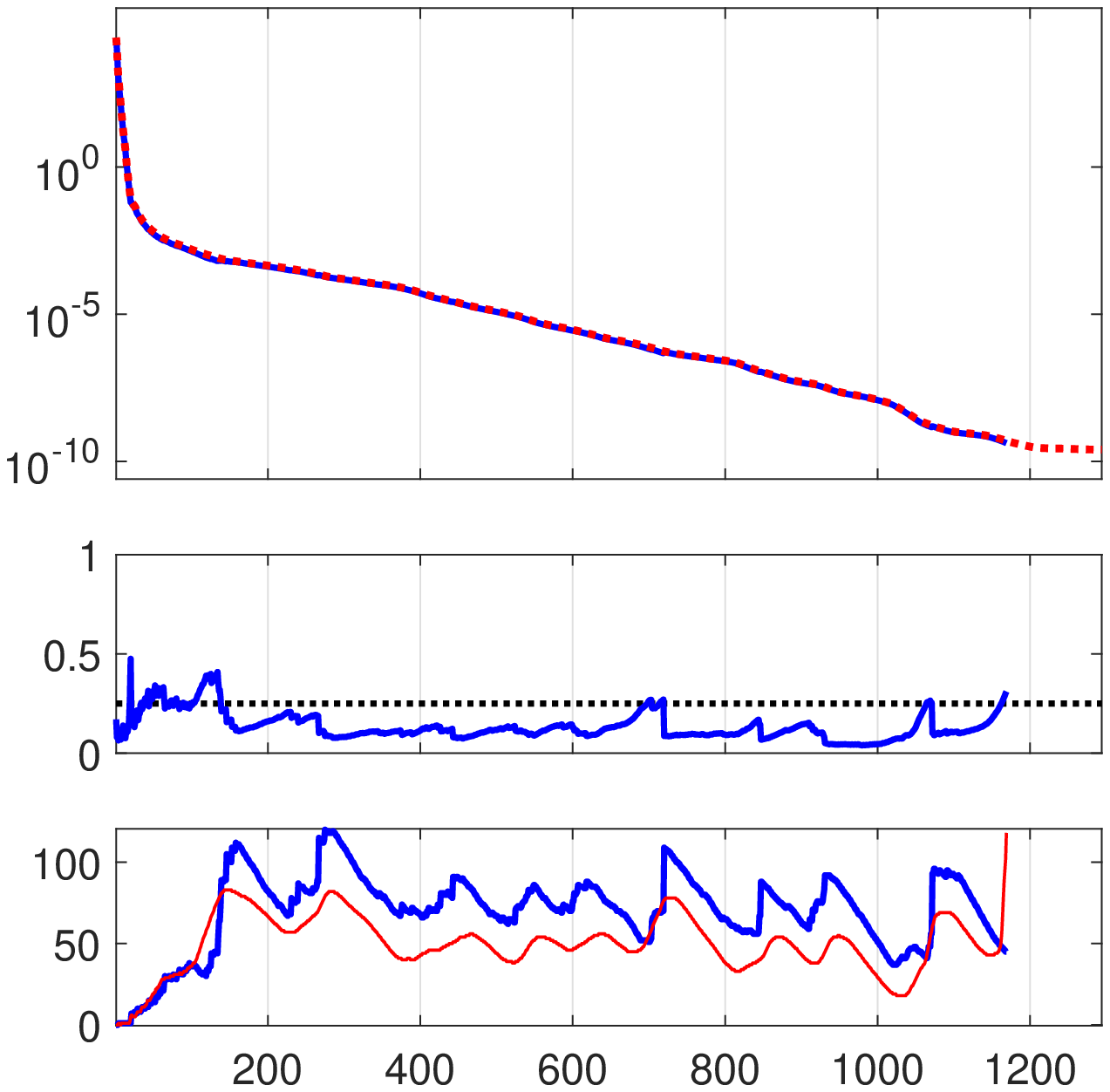}\\
PCG iterations
\end{minipage}
 \\[0.7cm]
\begin{minipage}[c]{0.49\linewidth}
\centering
{\tt tmt\_sym}\\[0.2cm]
\includegraphics[width=\linewidth]{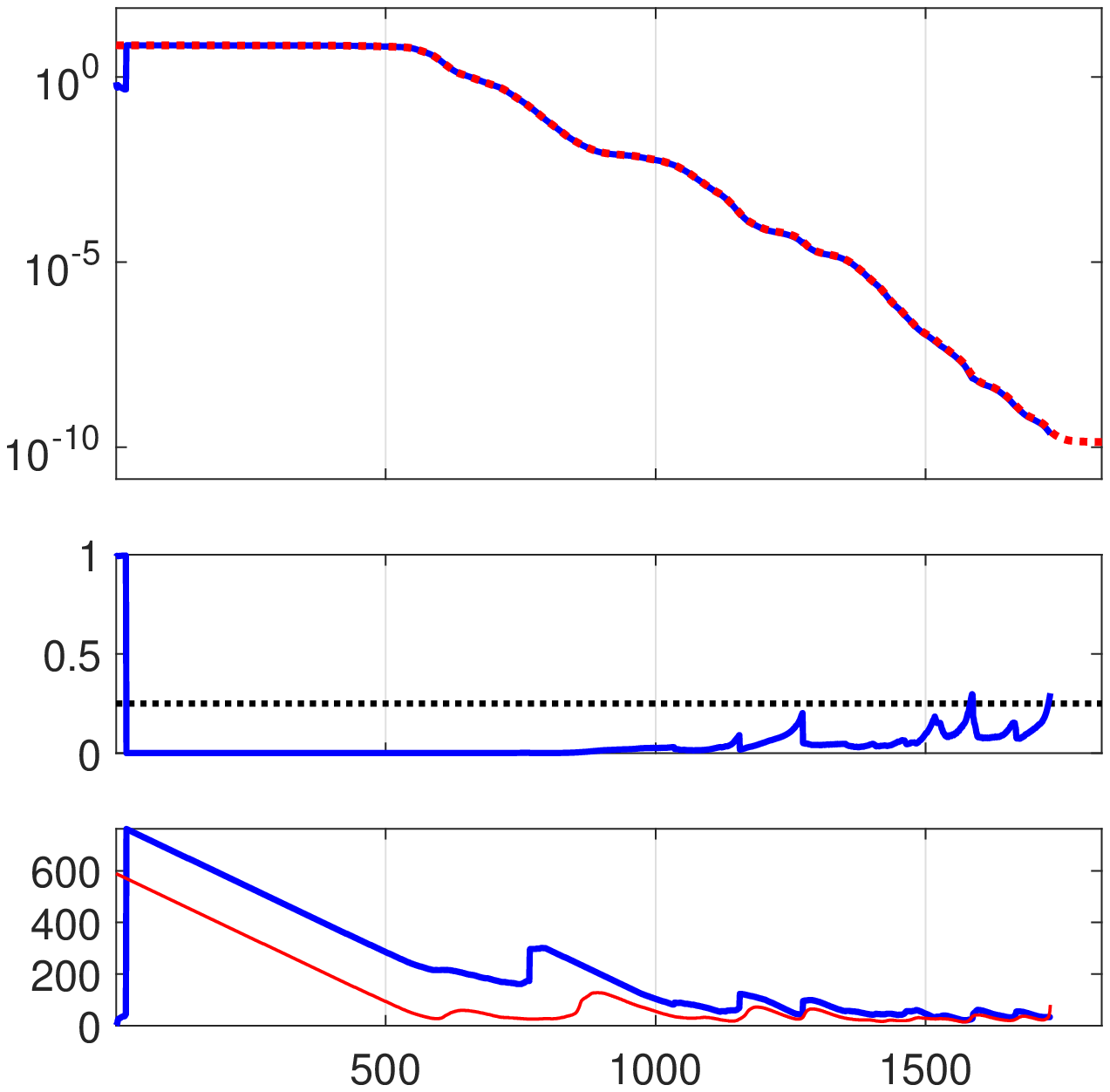}\\
PCG iterations
\end{minipage}
 \hfill
\begin{minipage}[c]{0.49\linewidth}
\centering
{\tt ldoor}\\[0.2cm]
\includegraphics[width=\linewidth]{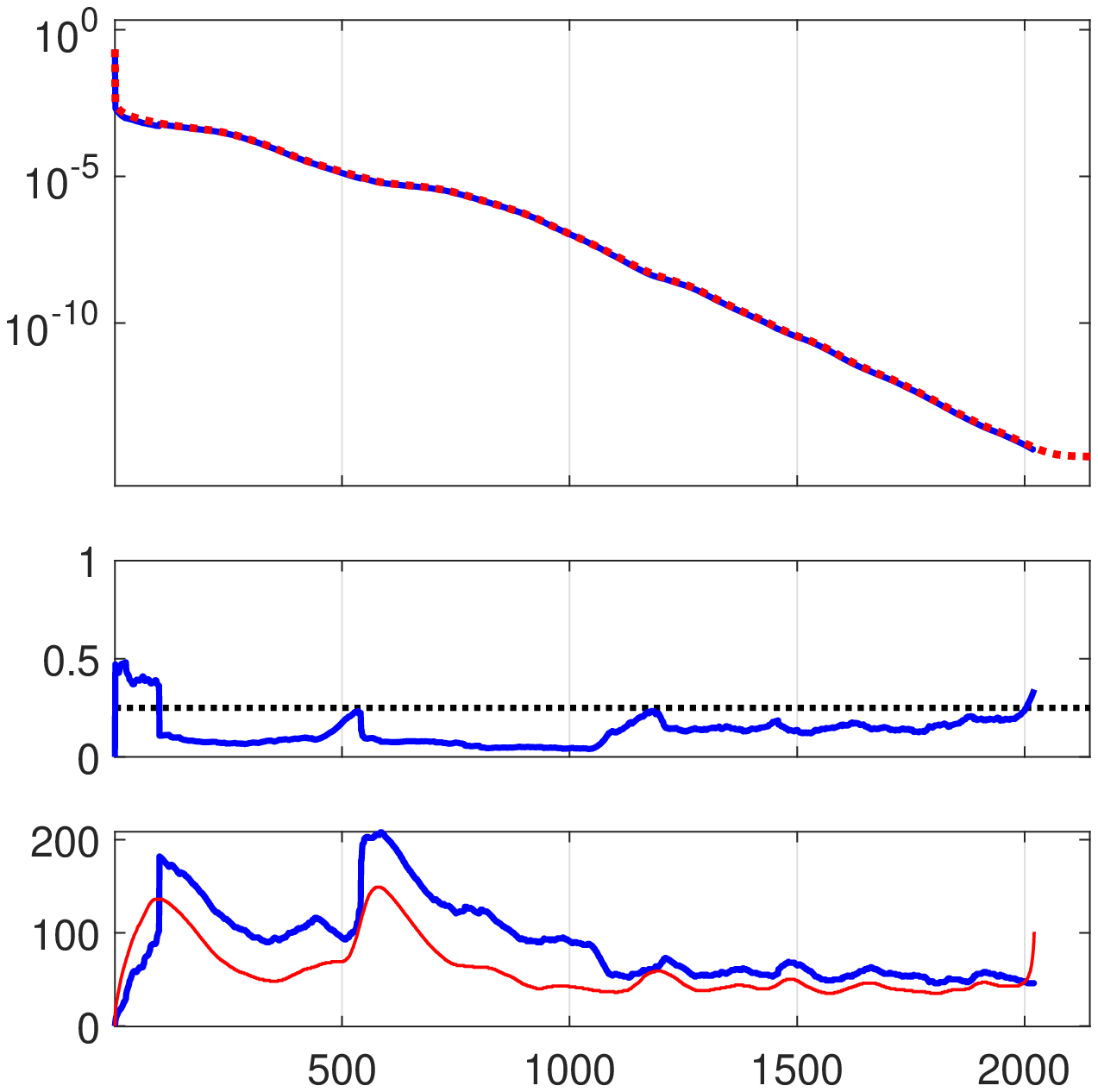}\\
PCG iterations
\end{minipage}

\caption{Matrices {\tt pwtk}, {\tt af\_shell3}, {\tt tmt\_sym}, {\tt ldoor}: the $A$-norm of the error vector $x-x_k$ and the adaptive lower bound (top part), the relative error and the prescribed tolerance from~\eqref{eqn:tolerance} (middle part), the value $d_k$ and the ideal value~$\widetilde{d}_k$ (bottom part).} \label{fig:numexp_03}
\end{figure}

In \Cref{fig:numexp_03} we consider quite large problems {\tt pwtk}, {\tt af\_shell3}, {\tt tmt\_sym}, and {\tt ldoor} that we are still able to handle using MATLAB on a personal computer. In all four cases, PCG was used to solve the systems.
We can again conclude that our adaptive strategy works satisfactorily
and provides tight bounds.
\Cref{alg:pseudo} can handle slow or fast convergence (as for {\tt pwtk} and {\tt ldoor}), the staircase convergence ({\tt pwtk} and {\tt tmt\_sym}) and (mostly) also initial quasi-stagnation phases ({\tt tmt\_sym}).
In some iterations, the value $d_k$ overestimates moderately the ideal value $\widetilde{d}_k$, providing more accurate adaptive estimates than required.
On the other hand, it is better to slightly overestimate the ideal value of
$\widetilde{d}_k$ than to underestimate it. As in the previous examples we observe that in iterations that are suitable for stopping the algorithm (a few orders of magnitude above the ultimate level of accuracy), the adaptively  determined  $d_k$ is very close to the ideal $\widetilde{d}_k$ so that no unnecessary iterations are needed to get the estimates with the required accuracy.

Note that in the test problem {\tt tmt\_sym}, the $A$-norm of the error vector exhibits quite a long initial quasi-stagnation phase (up to the iterations 500). In such cases, $d_k$ computed by \Cref{alg:pseudo} need not be close to
$\widetilde{d}_k$ in a few initial iterations. As a result, the lower bound $\Delta_{k:k+d_k}^{\klein{1/2}}$ can underestimate visibly the quantity of interest~$\varepsilon_k^{\klein{1/2}}$; see also the test problem
{\tt bcsstk04\_sym} (\Cref{fig:numexp_01}). In such cases,
the strategy for the initial choice of
described in  \Cref{sec:adaptd_init} should be used.
This strategy will be discussed in more detail in a later experiment.

\begin{figure}
\centering
\begin{minipage}[c]{0.49\linewidth}
\centering
{\tt s3dkt3m2}\\[0.2cm]
\includegraphics[width=\linewidth]{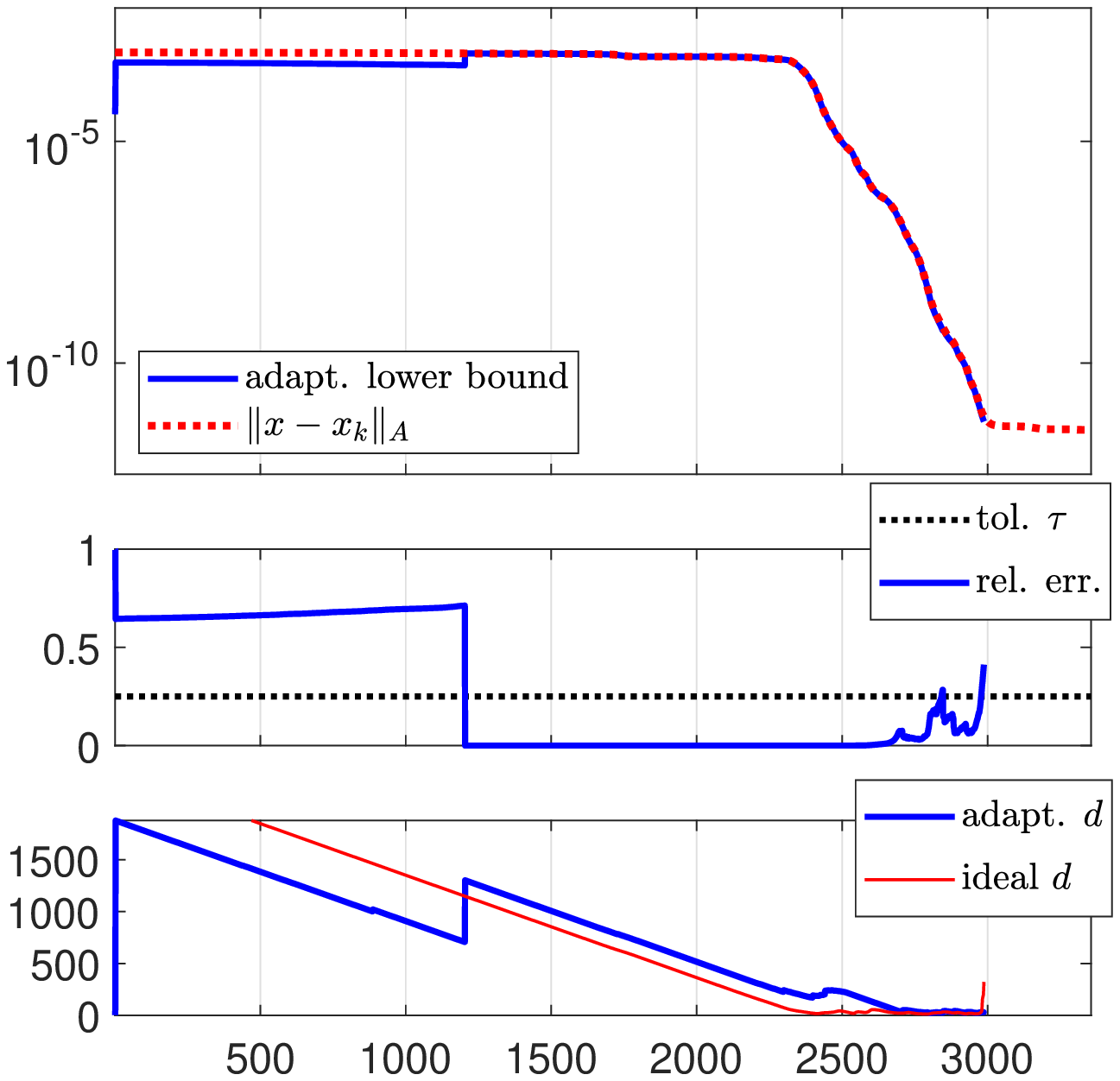}\\
PCG iterations
\end{minipage}
 \hfill
\begin{minipage}[c]{0.49\linewidth}
\centering
{\tt s3dkq4m2}\\[0.2cm]
\includegraphics[width=\linewidth]{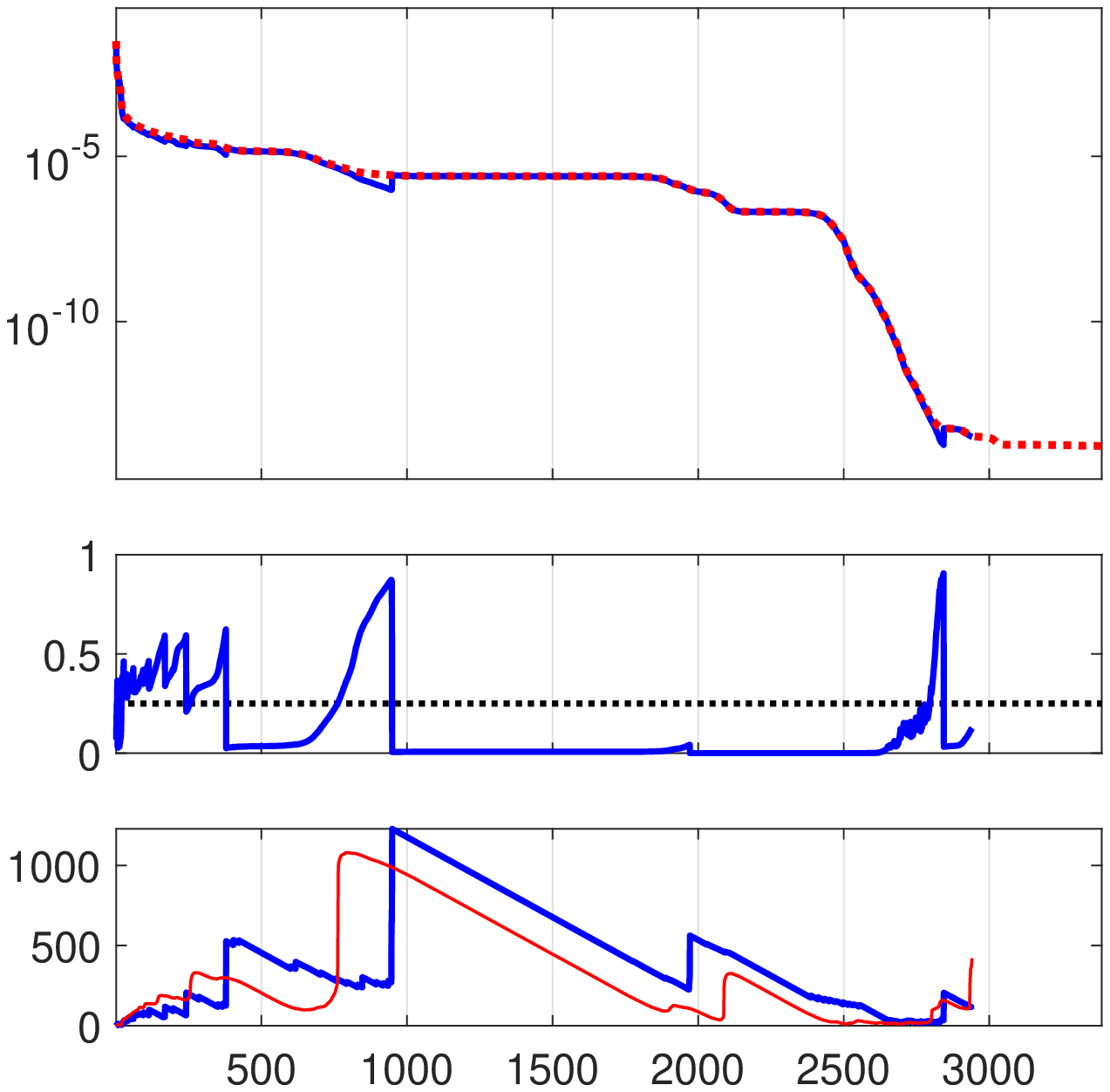}\\
PCG iterations
\end{minipage}

\caption{Matrices {\tt s3dkt3m2}, {\tt s3dkq4m2}: the $A$-norm of the error vector $x-x_k$ and the adaptive lower bound (top part), the relative error and the prescribed tolerance from~\eqref{eqn:tolerance} (middle part), the  value $d_k$ and the ideal value~$\widetilde{d}_k$ (bottom part).} \label{fig:numexp_04}
\end{figure}

In \Cref{fig:numexp_04} we show the most difficult cases we have found when testing \Cref{alg:pseudo} for the adaptive choice of $d_k$.
In the test problems {\tt s3dkt3m2} and {\tt s3dkq4m2},
the $A$-norm of the error vector exhibits various phases of the convergence (sublinear, superlinear, quasi-stagnation) that alternate.
The quasi-stagnation phase can happen at the very beginning as for {\tt s3dkt3m2}
but also in later iterations as for {\tt s3dkq4m2}. Such a complicated
convergence behaviour need not be fully captured by \Cref{alg:pseudo}.
At some iterations, the resulting $d_k$ underestimates significantly the ideal value~$\widetilde{d}_k$ and, therefore, we observe visually a considerable underestimation of the quantity of interest. While for
{\tt s3dkt3m2} we are able to fix the problem using
a suitable initial $d_0$, see \Cref{sec:adaptd_init}
and \Cref{fig:numexp_06_UB}, for the problem {\tt s3dkq4m2}
we were not able to find an improvement that would
prevent an underestimation close to the iteration 1000.

Nevertheless, though the strategy of \Cref{alg:pseudo} applied
to the test problems shown in \Cref{fig:numexp_04} does not always
provide estimates of $\|x-x_k\|_A$ with a required accuracy, it can still be considered as satisfactory. It provides lower bounds that are close to the quantity of interest (they are of the same magnitude) so that  one could use them in stopping criteria. Moreover,
in both cases,
$d_k$ is very close to $\widetilde{d}_k$ in the final convergence phase
that is suitable for stopping the algorithm.

In \Cref{fig:numexp_05_iphase} we demonstrate the technique of \Cref{sec:adaptd_init}
for the initial choice of $d_0$. As already observed in test
problems {\tt bcsttk04}, {\tt tmt\_sym}, and {\tt s3dkt3m2},
\Cref{alg:pseudo} can provide smaller values of $d_k$ than needed
in the initial quasi-stagnation phase.
One can overcome this trouble by the strategy presented in \Cref{sec:adaptd_init} when approximating the nominator in
\eqref{eqn:initial} using the quantity \eqref{eqn:aUB}
that does not need any
a priory information
or using the Gauss--Radau upper bound, if available.
In our experiment we use \Cref{alg:pseudofull}, i.e.,
the quantity \eqref{eqn:aUB}.

For both test problems {\tt tmt\_sym} and {\tt s3dkt3m2},
the strategy based on the criterion~\eqref{eqn:initial}
determines properly a safe value $d_0$ that ensures
the prescribed relative accuracy of the initial estimate.
The value $d_0$ overestimates the ideal value $\widetilde{d}_0$
which means that the error has significantly decreased
in $d_0$ iterations of PCG, and that the initial estimate
is more accurate than required. In other words, using $d_0$
we have safely overcame the initial quasi-stagnation phase.

After $d_0$ is found in \Cref{alg:pseudofull},
we continue along the lines of \Cref{alg:pseudo}.
In particular, we immediately get the estimates
of the $A$-norm of the error vector for the initial plateau since
the values of $k$ are increased and the values of $d$ are
decreased on line 12 of \Cref{alg:pseudo}, without computing further PCG iterations. Once the criterion \eqref{eqn:lowerb_strategy}
is not met, we continue to use the standard strategy
of \Cref{alg:pseudo}. Again,
$d_k$ is very close to $\widetilde{d}_k$ in the
final convergence phase.

\begin{figure}
\centering
\begin{minipage}[c]{0.49\linewidth}
\centering
{\tt s3dkt3m2}+$d_0$\\[0.2cm]
\includegraphics[width=\linewidth]{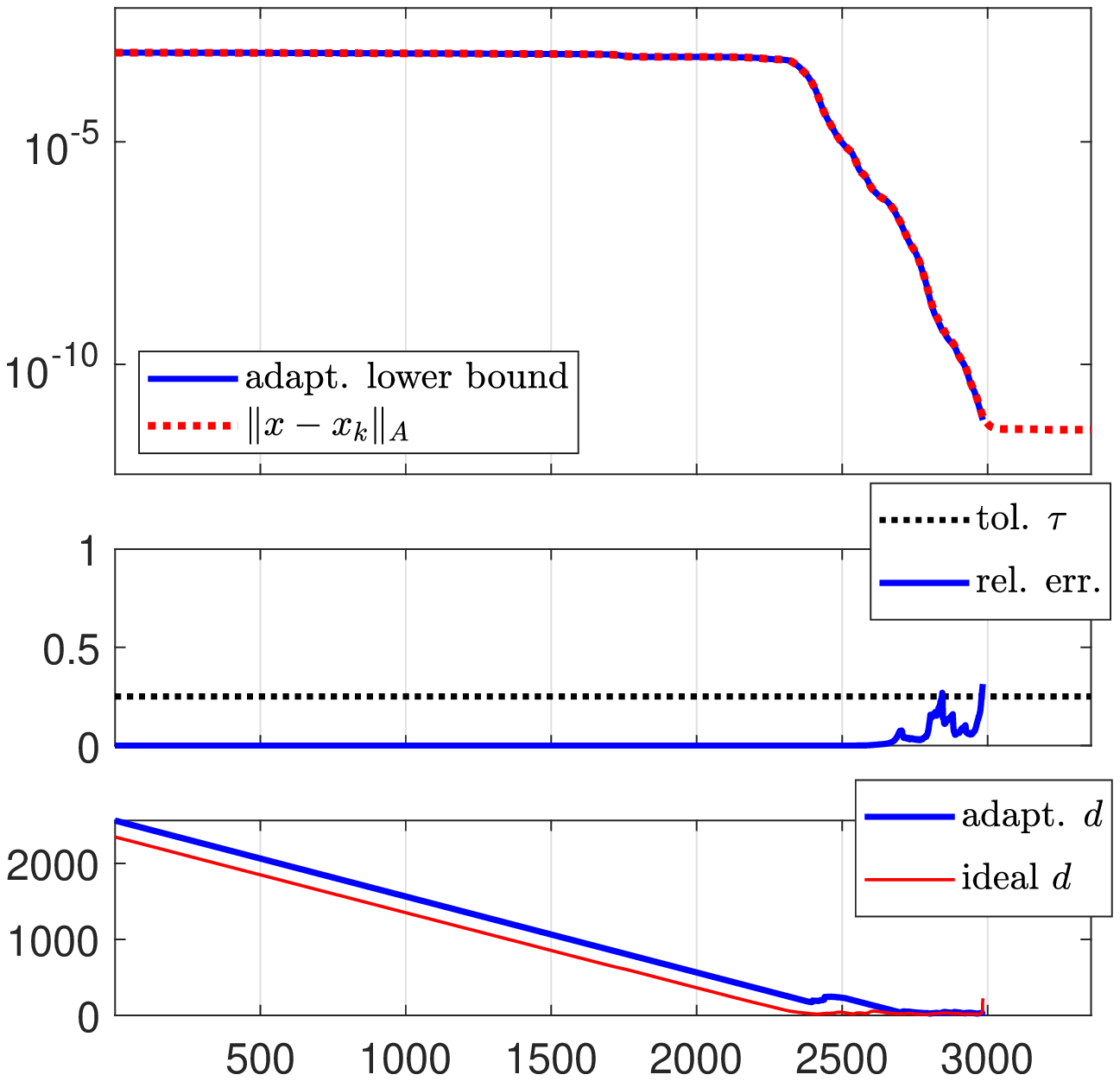}\\
PCG iterations
\end{minipage}
 \hfill
\begin{minipage}[c]{0.49\linewidth}
\centering
{\tt tmt\_sym}+$d_0$\\[0.2cm]
\includegraphics[width=\linewidth]{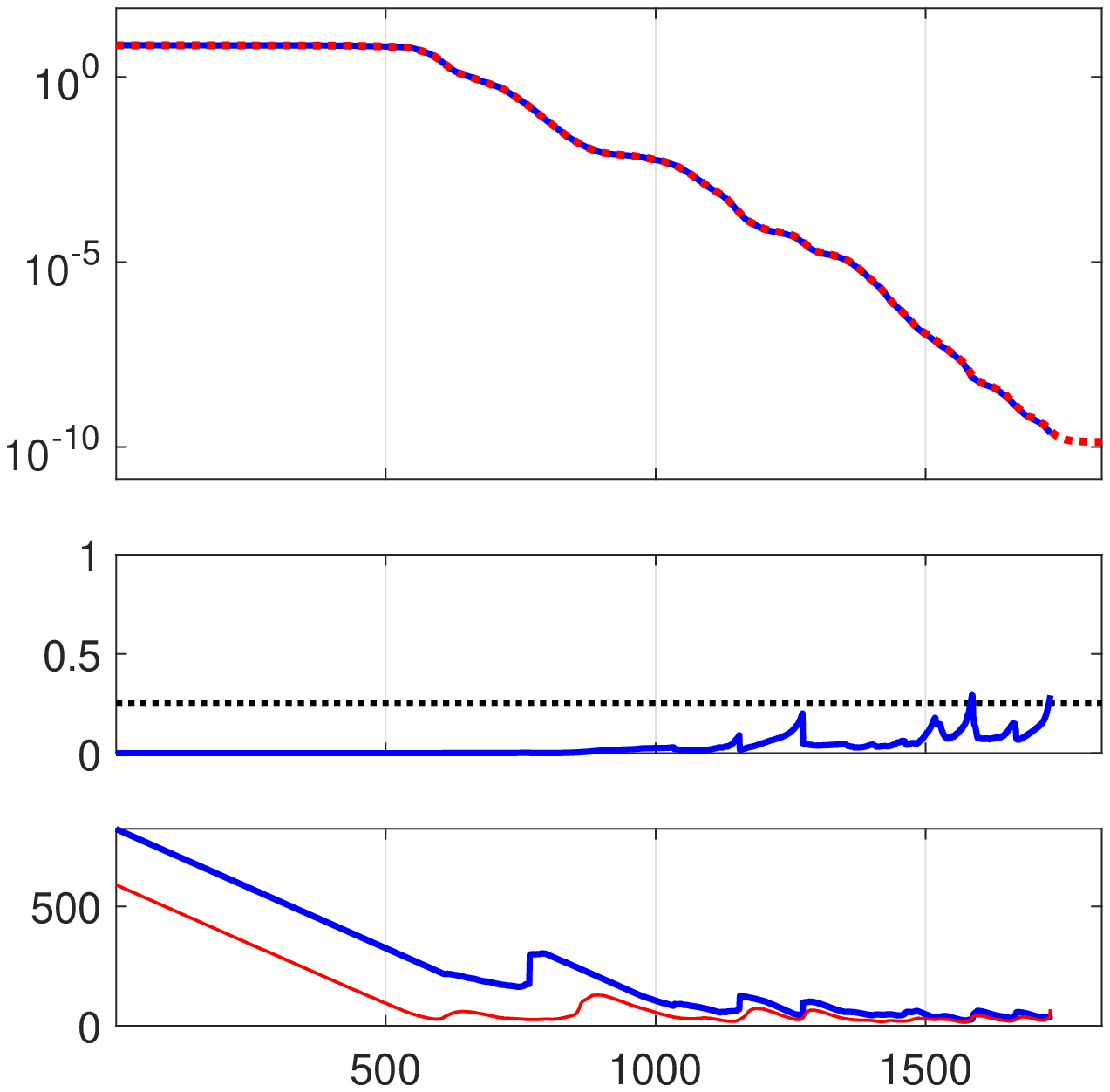}\\
PCG iterations
\end{minipage}

\caption{Matrices {\tt s3dkt3m2} and {\tt tmt\_sym} with the adaptive algorithm of \Cref{sec:adaptd_init} to determine the initial value~$d_0$: the $A$-norm of the error vector $x-x_k$ and the adaptive lower bound (top part), the relative error and the prescribed tolerance from~\eqref{eqn:tolerance} (middle part), the value $d_k$ and the ideal value~$\widetilde{d}_k$ (bottom part).} \label{fig:numexp_05_iphase}
\end{figure}

\begin{figure}
\centering
\begin{minipage}[c]{0.49\linewidth}
\centering
{\tt bcsstk04}\\[0.2cm]
\includegraphics[width=\linewidth]{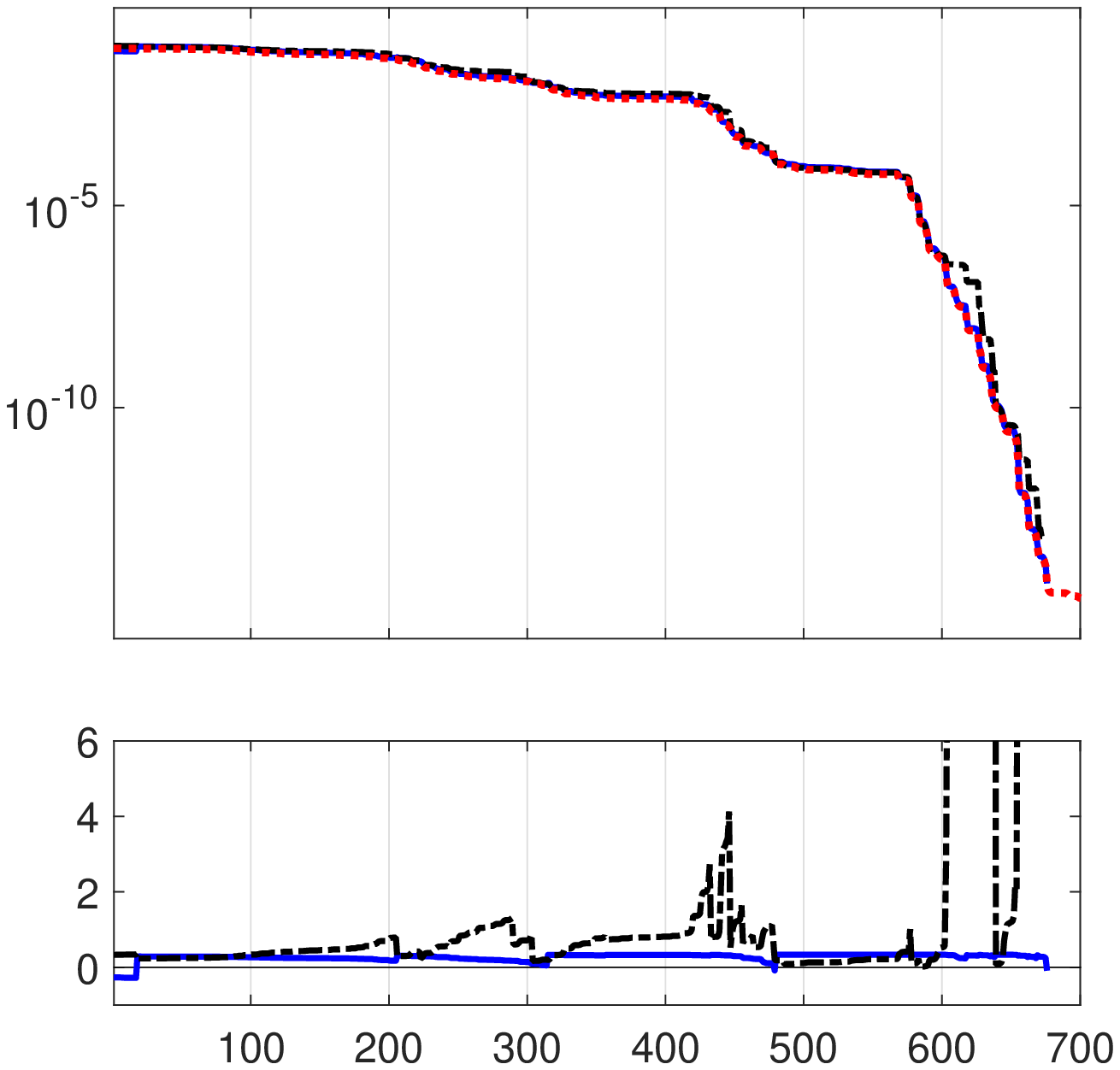}\\
PCG iterations
\end{minipage}
 \hfill
\begin{minipage}[c]{0.48\linewidth}
\centering
{\tt s3dkt3m2}\\[0.2cm]
\includegraphics[width=\linewidth]{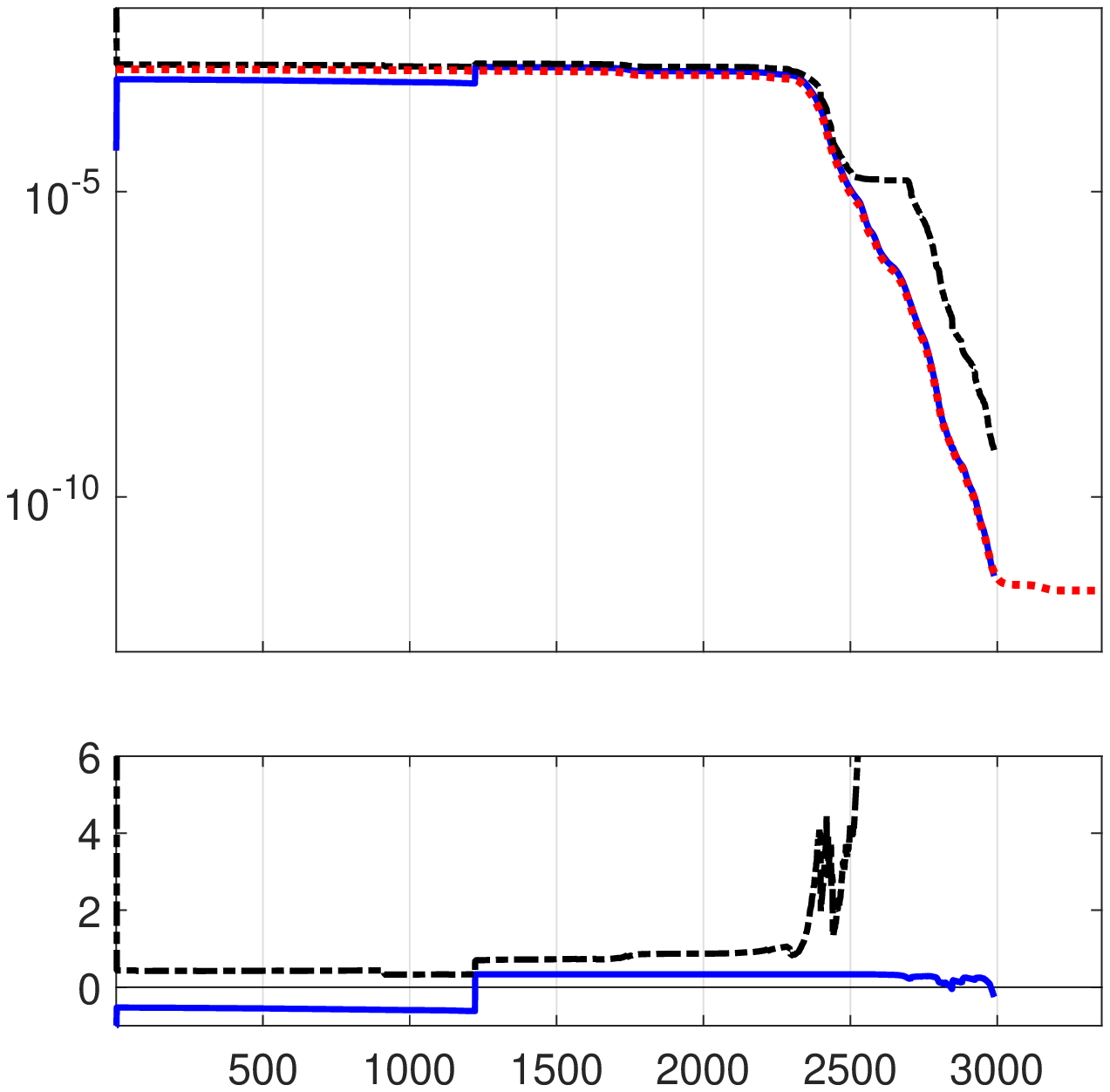}\\
PCG iterations
\end{minipage}

\caption{Matrices {\tt bcsstk04} and {\tt s3dkt3m2}. Top part: the $A$-norm of the error vector $x-x_k$ (red dashed curve), the square root of the heuristic upper bound \eqref{eqn:aUP} (blue solid curve), and the square root of the upper bound~\eqref{eqn:upperbound_delayGR} (black dash-dotted curve). Bottom part: the accuracy of bounds measured by the relative quantities~\eqref{eqn:qualityUpper}.}
\label{fig:numexp_06_UB}
\end{figure}

Finally, in the last \Cref{fig:numexp_06_UB} we test the quality of the heuristic upper bound~\eqref{eqn:aUP},
\begin{equation}
 \label{eqn:ubound}
\frac{\cfgq_{k:k+d}}{1-\tau}
\end{equation}
that uses the adaptive choice of $d_k$ as in
\Cref{alg:pseudo}. We know that if $d_k$ is chosen such that
\eqref{eqn:tolerance} is satisfied (if $d_k\geq \widetilde{d}_k $),
then \eqref{eqn:ubound} represents an upper bound on~$\varepsilon_k$.
Note that even though the heuristic upper bound \eqref{eqn:ubound}
is not guaranteed to be an upper bound in general, its advantage is that no estimates of eigenvalues are needed.
We compare the heuristic upper bound  \eqref{eqn:ubound}
with the upper bound $\Omega_{k:k+d}^{\klein{(\mu)}}$ defined below
that uses the same number of $d_k$ terms as the quantity \eqref{eqn:ubound}.
In particular,
the bound $\Omega_{k:k+d}^{\klein{(\mu)}}$ is defined using
the identity \eqref{eqn:dlocaldecrease}
with the last term $\varepsilon_{k+d}$,
and the Gauss--Radau upper bound~\eqref{eq:GR}
on $\varepsilon_{k+d}$,
\begin{equation}
 \label{eqn:upperbound_delayGR}
 \| x-x_k \|^2_A \leq \Delta_{k:k+d-1} + \cfa_{k+d}^{\smu} \|r_{k+d}\|^2 \equiv \Omega_{k:k+d}^{\klein{(\mu)}}.
\end{equation}
The bound \eqref{eqn:upperbound_delayGR} depends on an a~priori given underestimate $\mu$ to the smallest eigenvalue of the (preconditioned) system matrix. In our experiments, 
the underestimate~$\mu$ was computed in the same way as described in
 \Cref{sec:GRupper}.

In the top part
of \Cref{fig:numexp_06_UB}
we plot
$\| x-x_k \|_A$ (red dots)
together with the upper bound $(\Omega_{k:k+d_k}^{\klein{(\mu)}})^{1/2}$
and the heuristic upper bound
given by the square root of \eqref{eqn:ubound}; in the bottom part,
we check the quality of bounds using the relative quantities
\begin{equation}
 \label{eqn:qualityUpper}
 \frac{\Omega_{k:k+d_k}^{\klein{(\mu)}}-\varepsilon_k}{\varepsilon_k}
 \ \mbox{(dashed)}
 \quad \mbox{ and } \quad  \frac{\frac{\Delta_{k:k+d_k}}{1-\tau}-\varepsilon_k}{\varepsilon_k}
  \ \mbox{(solid)}.
\end{equation}
Obviously, the considered heuristic upper bound is more accurate than
the Gauss--Radau upper bound in the convergence phase that is
crucial for stopping the iterations. 
In a future work we would like to explain the behavior of the Gauss--Radau upper bound (why it is delayed) and possibly fix the problem seen in the last iterations.

\section{Summary and concluding discussion}
 \label{sec:conclusions}

In this paper we focused on the accurate estimation of the error $\varepsilon_{k}$
in the P(CG) method. Our approach is based on the Hestenes and Stiefel
formula \eqref{eqn:dlocaldecrease} that is well preserved also
during finite precision computations. Our goal was
to find a nonnegative integer $d$ (ideally
the smallest one) such that the relative accuracy of the estimate
$\Delta_{k:k+d}$ is less than or equal to a prescribed tolerance~$\tau$.
The developed
technique for the adaptive choice of~$d$ is purely heuristic so that
the accuracy of
the estimate $\Delta_{k:k+d}$ cannot be guaranteed
in general.
Nevertheless, the heuristic strategy has shown to be robust and
reliable, which was demonstrated by numerical experiments.
Moreover, in the final stage of convergence that is suitable for stopping the iterations, the suggested value of $d$ is
usually very close to the optimal (ideal) one so that
one avoids unnecessary iterations.

One can also think of many other different adaptive strategies
that are based, e.g., on the Gauss--Radau
or anti-Gaussian quadrature rules, or on other techniques for approximating
the ratio \eqref{eqn:tolerance2}. We have tested many of them, and the strategy suggested in this paper seems to be efficient, simple, and robust.

Our techniques can also be adjusted for estimating the relative quantities
\[
 \frac{\|x-x_k\|_A^2}{\|x-x_0\|^2_A} \quad\mbox{or}\quad \frac{\|x-x_k\|_A^2}{\|x\|^2_A}
\]
using the fact that $\|x\|_A^2 = \|x-x_0\|^2_A +b^Tx_0 + r_0^T x_0$; see also \cite{StTi2005}.

We hope that the results presented in this paper will prove to be useful in practical computations.
They allow to approximate the error $\varepsilon_{k}$ at a negligible cost during (P)CG iterations without any user-defined parameter, while taking into account the prescribed relative accuracy of
the estimate.

\newpage
\noindent
\appendix
\section{\Cref{alg:pseudo} (MATLAB code, preconditioned version)}
\label{sec:MATLABcode}

\lstset{language=Matlab,%
    breaklines=true,%
    basicstyle=\small\ttfamily,
    morekeywords={matlab2tikz},
    keywordstyle=\color{blue},%
    morekeywords=[2]{1}, keywordstyle=[2]{\color{black}},
    identifierstyle=\color{black},%
    stringstyle=\color{mylilas},
    commentstyle=\color{green!40!black},
    showstringspaces=false,
    numbers=left,%
    numberstyle={\tiny \color{black}},
    numbersep=9pt, 
    deletekeywords={beta,sqrt,sum,find,max},
    deletekeywords={isempty,nargin,size,zeros,speye},
}

\vspace*{-3ex}
\lstinputlisting{cga_tichy.m}


\end{document}